\definecolor{antiquefuchsia}{rgb}{0.57, 0.36, 0.51}
\definecolor{azure}{rgb}{0.0, 0.5, 1.0}
\numberwithin{equation}{section}
\newtheorem{theorem}{Theorem}[section]
\newtheorem{lemma}[theorem]{Lemma}
\newtheorem{proposition}[theorem]{Proposition}
\newtheorem{definition}[theorem]{Definition}
\theoremstyle{remark}
\newtheorem{remark}[theorem]{Remark}
\newcommand{\N}{\mathbb{N}}
\newcommand{\Z}{\mathbb{Z}}
\newcommand{\R}{\mathbb{R}}
\newcommand{\mres}{\mathbin{\vrule height 1.6ex depth 0pt width
0.13ex\vrule height 0.13ex depth 0pt width 1.3ex}}
\DeclareMathOperator{\dist}{dist}
\DeclareMathOperator{\Per}{Per}
\renewcommand{\d}{{\rm d}}
\newcommand{\HH}{{\mathcal H}}
\def\Z{\ensuremath\mathbb{Z}}
\title[]{Lattice tilings with minimal perimeter\\ and unequal volumes}
\author[]{Francesco Nobili} 
\address{Universit\'a di Pisa, Dipartimento di Matematica, Largo Bruno Pontecorvo 5,
56127 Pisa, Italy}
\email{\url{francesco.nobili@dm.unipi.it}}
\author[]{Matteo Novaga} 
\address{Universit\'a di Pisa, Dipartimento di Matematica, Largo Bruno Pontecorvo 5,
56127 Pisa, Italy}
\email{\url{matteo.novaga@unipi.it}}
\date{\today. \\
MSC(2020): 49Q05, 58E12 (primary), 52C20 (secondary). \\ 
\emph{Keywords}: Isoperimetric problems, Lattice tilings, Regularity.}
\begin{document}
\begin{abstract}
We study periodic tessellations of the Euclidean space with unequal cells arising from the minimization of perimeter functionals. Existence results and qualitative properties of minimizers are discussed for different classes of problems, involving local and non-local perimeters. Regularity is then addressed in the general case under volume penalization, and in the planar case with the standard perimeter, prescribing the volumes of each cell. Finally, we show the optimality of hexagonal tilings among partitions with almost equal areas.
\end{abstract}

\maketitle
\tableofcontents

\section{Introduction} 
%A relevant class of problems in discrete geometry is given by tilings or tessellation of the Euclidean space $\R^d$. 
A relevant problem in geometry, proposed by Lord Kelvin in 1887 \cite{Kelvin1887}, consists in finding the partition of $\R^d$ into cells of equal volume, so that the area of the surfaces separating them is as small as possible. In two dimensions, a complete solution was achieved by T. C. Hales in \cite{Hales01} who proved that the minimal configuration is the regular hexagonal tiling, thus solving the celebrated Honeycomb conjecture (see also \cite{Fejes1,Fejer2,Fejer3,Fejer4,MorganChristopherGreenleaf1988,Morgan1999}). 
In three dimensions Lord Kelvin proposed a possible candidate minimizer, the so-called Kelvin foam, 
but an example with lower (average) surface area was later provided by Weaire and Phelan in \cite{WeairePhelan1996},
so that the original problem remains open, and the same holds in higher dimensions.

These types of questions naturally arise in the calculus of variation, when tilings are recast as minimizers of energy functionals involving perimeter measures under partitioning constraints. In particular, the theory of functions of bounded variation (see, e.g., \cite{AmbrosioFuscoPallarabook}) and, more specifically, that of sets finite perimeter (see, e.g., \cite{Maggi12_Book, MorganBOOK}) are a natural language to adopt here, with a powerful regularity theory at disposal to address these types of problems. 

\medskip

In this work, we consider partitions that are invariant by the action of a $d$-dimensional group of translations in $\R^d$, i.e., a \emph{lattice}. We also admit partitions of $\R^d$ with unequal cells. More precisely, we will look for minimizers of general perimeter functionals in the class of fundamental domains for the action of the group. The approach was previously adopted in \cite{Choe89} for $d=3$ and for the classical perimeter, and later also in \cite{MartelliNovagaPludaRiolo2017,NovagaPaoliniStepanovTortorelli22,NovagaPaoliniStepanovTortorelli23,CesaroniNovaga23_1,CesaroniFragalaNovaga23}. 

We start by fixing the notation and discuss our main results.
Given a lattice $G$ (i.e.\ a discrete group of $\R^d$) with volume $d(G)>0$, a fundamental domain for the action of $G$ is a set $D\subseteq \R^d$ so that $|D|=d(G)$ and $ (D+g)$ covers $\R^d$ with no overlapping, up to negligible sets, when $g$ runs among all elements of the group $G$ (see Section \ref{sec:lat} for details).
%we refer also to \cite{ConwaySloane1999} and references therein. 
We then consider two notions of perimeter:
\[
\Per_\varphi(E,\cdot),\quad\text{and}\quad \Per_K(E,\cdot), \quad\text{for all } E\subseteq \R^d\text{ Borel},
\]
the local anisotropic perimeter, with $G$-invariant anisotropic norm $\varphi \colon \R^d \to [0,\infty)$, and the non-local perimeter with respect to a non-negative interaction kernel $K \colon \R^d \to [0,\infty)$ that is singular at the origin with a fractional type singularity (see  Section \ref{sec:perimeters} for precise definitions and relevant properties). 
%and relations with the lattice structures.

The first problem we tackle is the existence of minimal tessellations of $\R^d$, with possibly unequal cells,
which are periodic with respect to a given lattice $G$. This can be achieved by prescribing a volume vector $\vec v = (v_1,...,v_N)$, 
where $v_i>0$ for all $i=1,...,N$ and $\sum_{i=1}^N v_i = d(G)$ for some $N\in\N$, and then by looking at minimizers of 
\begin{equation}\label{main:constrained G} 
    \inf_D \inf \left\{ \frac 12\sum_i\Per(E_i) \colon \begin{array}{ll}
        E_i\subseteq D, &|E_i|=v_i \\
         |E_i\cap E_j|=0, &\big|D\setminus \cup_i E_i\big|=0, \end{array}\right\},
\end{equation}
where the first infimum runs over all fundamental domains $D$ for the action of $G$ and $\Per(\cdot)$ denotes either the local or the 
non-local perimeter. We will prove existence of minimizers for the problem above in Theorem \ref{thm:step 2}.

It is then natural to perform a further minimization step by letting the lattice to vary:
\begin{equation}\label{main:constrained}
    \inf_G\inf_D \inf \left\{ \frac 12\sum_i\Per(E_i) \colon \begin{array}{ll}
        E_i\subseteq D, &|E_i|=v_i \\
         |E_i\cap E_j|=0, &\big|D\setminus \cup_i E_i\big|=0, \end{array}\right\},
\end{equation}
where the first infimum now runs among all lattices $G$ with prescribed volume $d(G)=L^d$, for some $L>0$. 
%and the successive infima are understood as before. 
Existence of minimizers will be proved in Theorem \ref{thm:step 3}.

We can also relax the volume constraints and consider, for some penalization parameter $\lambda>0$, the minimum problem
\begin{equation}\label{main:penalized}
\inf_G\inf_D \inf \left\{  \frac 12\sum_i \Per(E_i) + \lambda \big| |E_i|-v_i\big|  \colon \begin{array}{l}
        E_i\subseteq D \\ |E_i\cap E_j|=0 \\
        \big|D\setminus \cup_i E_i\big|=0
    \end{array}  \right\},
\end{equation}
where we do not require $|E_i|=v_i$, since it's encoded in the functional to be minimized. The existence of minimizers will be proved in Theorem \ref{thm:penalized}.

In all these problems, existence of minimal configurations will be proved by the direct method of the calculus of variations, 
%exploiting pre-compactness and lower semicontinuity properties of sets of finite perimeter while facing the lack of compactness given by the $G$-invariance of the above functionals. In particular, 
compactness will be obtained by translating back countably many pieces of almost minimizers that are possibly escaping to infinity, 
following the method in \cite{NovagaPaoliniStepanovTortorelli22,CesaroniNovaga22,CesaroniNovaga23_1}.

\medskip

Given minimizers $(E_i),D,G$ of any of the above problems, we have that $\{ E_i +g \colon i, g\in G\}$
is a partition of $\R^d$, up to negliglible sets.
In all cases, the above minimal partition comes with a \emph{local minimality} property on sufficiently small balls, depending on the packing radius of $G$. Here, local minimality is understood in the sense of sets of finite perimeter for compactly supported perturbations, see Section \ref{sec:minimality} for the precise definitions. In particular, local minimality typically gives access to regularity theory. Here, the main challenge is to understand the local finiteness of the partition to invoke classical regularity results for isoperimetric problems on clusters. A key difference in our problems is that the partitions associated with a minimizer of \eqref{main:penalized} are much well-behaved compared to those in \eqref{main:constrained G}, \eqref{main:constrained}, as they do not require to fix volumes of the compactly supported perturbations. Compare Lemma \ref{lem:minimality constrained} with Lemma \ref{lem:minimality penalized} that in turn makes it possible to derive regularity results in Proposition \ref{prop:regularty penalized 1}, Proposition \ref{prop:regularty penalized 2} and Proposition \ref{prop:regularity penalized 3}. 
In this respect, it would be interesting to better understand the regularity of the volume-constrained problems. For instance, is to be understood if minimizers for the penalized problem \eqref{main:penalized} are also minimizers for the constrained one \eqref{main:constrained G}, when $\lambda\gg 1$ is sufficiently big. Moreover, the dependence of the regularity on the total volume of the lattices $L^d$ a priori depends on $L$. This is an obstacle to obtaining $L^1_{loc}$-compactness of minimal partitions when letting $L\uparrow\infty$, so to reach the non-periodic scenario. However, in this case, we expect volume constraints to be in general lost in the limit (see also a related discussion \cite{Morgan1999}).

\medskip

When restricting to the planar case, we have access to more tools in order to understand the regularity of minimal partitions. In this case, we are able to prove that minimal partitions are locally finite 
and therefore reduce to the classical theory of cluster with standard Euclidean perimeter, see Theorem \ref{thm:regularity constrained planar}.

Given that we are here interested in partitions with unequal volumes, it is natural to ask if the hexagonal partition is stable among periodic partitions with \emph{almost equal volumes}. We address this issue in Theorem \ref{thm:stability HC}, by relying on the stability of networks \cite{pludaPozzetta23} (see also \cite{FisherHenselLauxSimon2023}) and a contradiction argument involving fine decomposition of planar sets of finite perimeter (see \cite{AmbrosioCasellesMasnouMorel01}), and improved convergence of \emph{good parts} of the boundaries of a minimizing sequence. In particular, we show that small modifications of the Honeycomb partition (see Figures \ref{fig:pic3} and \ref{fig: D with H v}) minimize \eqref{main:constrained} in the volume regime $v_i\approx 1$ for all $i$. 

We finally observe that the same result holds for the anisotropic perimeter when the corresponding Wulff shape is a regular hexagon.

\medskip

\noindent\textbf{Acknowledgments}. The authors are members of INDAM-GNAMPA, and acknowledge partial support by the MIUR Excellence Department Project awarded to the Department of Mathematics, University of Pisa, and by the PRIN Project 2022 GEPSO. Both authors would like to thank A. Pluda for useful discussions.

\section{Notation and preliminary definitions}

\subsection{Lattices}\label{sec:lat}
Given $d$-vectors $(e_i)_{i=1}^d\subseteq \R^d$, we denote $\det(e_i)$ the determinant of the matrix whose $i$-column is $e_i$. We recall the notion of lattice.

\begin{definition}[Lattice]
    A \emph{lattice} is a discrete subgroup $G$ of $(\R^d,+)$ of rank $d$. Any $g \in G$ is uniquely determined by coefficients $(k_i)_{i=1}^d\subseteq \Z^d$ and a given basis $(e_i)_{i=1}^d\subseteq \R^d$ via $g = \sum_{i=1}^d k_ie_i.$ The \emph{volume} of the lattice $G$ is the number $d(G)\coloneqq |\det(e_i)|$.
\end{definition}
We observe that the volume of a lattice $G$ is invariant by the choice of the basis $(e_i)$ of the lattice. Indeed, any two bases are related by an orthonormal transformation with determinant $\pm 1$. Let us also define the packing and covering radius of $G$, respectively given by
\begin{align*}
\rho_G&\coloneqq\sup \{ r>0 \colon B_r(g_1)\cap B_r(g_2) = \emptyset \text{ for all }g_1\neq g_2 \in G\};\\
r_G&\coloneqq \inf\{r>0 \colon \cup_g (B_r(0)+g) = \R^d \}.
\end{align*}

\begin{definition}[Fundamental Domain]
A \emph{fundamental domain} for the lattice $G$ is a Borel set $D\subseteq \R^d$ so that $|(D+g)\cap D |=0$ for every $g\in G,\, g\neq {\rm id}$ and $|\R^d \setminus \bigcup_{g\in G}(D+g)|=0$.
\end{definition}
Given a lattice $G$ and a basis $(e_i)$, we can consider 
\begin{equation}
  D_G \coloneqq \Big\{ \sum_i t_i e_i \colon t_i \in [0,1), i=1,..,n\Big\}\subseteq \R^d.
\label{eq:parallelepiped}  
\end{equation}
In what follows we shall use that $D$ is a fundamental domain for $G$.
\begin{remark}\label{eq:volume fund domain}
   Let us fix a lattice $G$ with $d(G)=V$, for some $V>0$. We remark that any fundamental domain $D$ for the lattice $G$ satisfies $|D|=V$. Indeed, letting $(e_i)$ be a basis of $G$ and $D_G$ be defined as above,
   since $D_G$ is a fundamental domain for $G$, for any other fundamental domain $D$ we have
\begin{equation}
V = |D_G| = \Big|\bigcup_{g \in G} D_G \cap (D+g) \Big| = \Big|\bigcup_{g \in G} (D_G -g) \cap D\Big| =|D|.
\end{equation}
\end{remark}

\subsection{Perimeters}\label{sec:perimeters} In this note, we shall consider two different types of Perimeter measures on $\R^d$, namely anisotropic local perimeters and non-local perimeters. We briefly introduce here the definitions and list afterwards the properties we are going to use.

For every $\Omega\subseteq \R^d$ open and $E\subseteq\R^d$ Borel, we consider the anisotropic perimeter
    \[
    \Per_\varphi (E,\Omega)\coloneqq \int_{\partial^*E\cap\Omega}\varphi (\nu_E(x))\,\d \HH^{d-1}(x),
    \]
    where $\varphi$ is a norm in $\R^d$. Here, $E$ is a set of finite perimeter,  $\partial^*E, \nu_E$, are respectively the reduced boundary of $E$ and the unit outer normal, and $\HH^{d-1}$ is the $d-1$ Hausdorff measure, see \cite{AmbrosioFuscoPallarabook} for the classical theory and \cite{Maggi12_Book} for the anisotropic case.
 
We will also consider non-local perimeters with singularity of fractional type at the origin following \cite[Section 6]{NovagaPaoliniStepanovTortorelli22},\cite[Section 1.2]{CesaroniNovaga22} and references therein. Let $K \colon \R^d \to \R$ be an interaction Kernel satisfying:
\begin{itemize}
    \item[a)] $K(x)= K(-x)$ for all $x\in\R^d$;
    \item[b)] $\min( 1, |x|)K(x) \in L^1(\R^d)$;
    \item[c)] there is $C>0$ and $s \in (0,1)$ so that $K(x)>C |x|^{-d-s}$.
\end{itemize}
We can then define the non-local perimeter of a Borel set $E\subseteq \R^d$ and the non-local relative perimeter of $E$ on an open set $\Omega\subseteq \R^d$, respectively as
    \begin{align*}
       \Per_K(E) &\coloneqq \int_E\int_{\R^d\setminus E}K(x-y)\,\d x \d y \\
       \Per_K(E,\Omega) &\coloneqq \int_{E\cap \Omega}\int_{\R^d\setminus E}K(x-y)\,\d x \d y + \int_{E\setminus \Omega}\int_{\Omega\setminus E}K(x-y)\,\d x \d y.
    \end{align*}

We now list some key properties shared by the above perimeter measures. To be more concise, we unify the presentation and we consider $\Per(\cdot,\cdot)$ to be either the local or non-local perimeter in the following:
    \begin{itemize}
        \item[]{\sc Semicontinuity}. If $E_n \to E$ in $L^1_{loc}$, then $\Per(E,\Omega)\le \liminf_{n\uparrow\infty}\Per(E_n,\Omega)$;
        \item[]{\sc Monotonnicity}. If $\Omega\subseteq \Omega'$, then $\Per(E,\Omega)\le \Per(E,\Omega')$; moreover, if $\Omega_i\subseteq \Omega_{i+1}$, then $\Per(E,\Omega_i)\to\Per(E,\cup_i \Omega_i)$;
        \item[]{\sc Compactness}. If $(E_n)$ are so that $\sup_n\Per(E_n,\Omega)<\infty$ for some pre-compact open set $\Omega$, then up to subsequence $E_n\cap \Omega \to E\cap \Omega$ for some Borel set $E$;
        %\item[]{\sc Submodularity}. 
        \item[]{\sc Almost Subadditivity}. if $\Omega_i$ are pairwise disjoint and open, we have:
        \begin{itemize}
            \item[{\rm i})] it holds
                \begin{align*}
                    \Per(E,\cup_i\Omega_i) &\le \sum_i \Per(E,\Omega_i)\le \Per(E,\cup_i \Omega_i) \\
                    &\qquad + \sum_i |E\cap \Omega_i|\min_{j\neq i} \phi({\rm dist}(\Omega_i,\Omega_j));
                \end{align*}
            \item[{\rm ii)}] there is $c\ge 1$ so that, if $|\R^d\setminus \cup_i\Omega_i|=0$, then $\sum_i\Per(E,\Omega_i) \le c\Per(E)$;
        \end{itemize}
       in the case of the anisotropic perimeter we have $\phi =0,c=1$, while in the case of the non-local perimeter, we have $c=2,\phi(t) =\int_{\R^d\setminus B_t(0)}K(x)\,\d x$.
    \end{itemize}
    Moreover, we relate the Perimeter measure with the lattice $G$ by assuming the following $G$-invariance property:
    \begin{itemize}
        \item[]{\sc $G$-invariance}. It holds that $\Per((E+g),(\Omega +g)) = \Per(E,\Omega)$ for every $g \in G$.
    \end{itemize}
    Notice that in the anisotropic case, this is implied by the translation invariance of the norm $\varphi$.
    
\section{Existence results}
\subsection{Minimizing periodic partitions}
Given a lattice and a fundamental domain for its action, we now prove existence of a minimizing partition with volume constraints.
\begin{proposition}\label{prop:step 1}
    Let $L>0,\mu \ge 0$, let $G$ be a lattice in $\R^d$ with $d(G)=L^d$ and let $D\subseteq \R^d$ be a fundamental domain for $G$. Fix $N \in \N$ and a volume vector $ \vec v \coloneqq (v_1,...,v_N)$ so that $v_i> 0, \sum_i v_i = L^d$. Consider the minimization problem
    \[
    f_\mu(D,\vec v)\coloneqq  \inf \left\{ \mu \Per(D) + \frac12\sum_i\Per(E_i) \colon \begin{array}{ll}
        E_i\subseteq D\emph{ Borel}, &|E_i|=v_i \\
         |E_i\cap E_j|=0, &\big|D\setminus \cup_i E_i\big|=0
    \end{array}  \right\},
    \]
    where $\Per$ is either the local or non-local perimeter.

    If $f_\mu(D,\vec v)$ is finite, then there exists a minimizing partition $(E_i)_i$ of $D$.
\end{proposition}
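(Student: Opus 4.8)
The plan is to run the direct method of the calculus of variations, the essential point being that here the domain $D$ is frozen and has finite volume, so tightness of a minimizing sequence comes for free. First I would observe that the term $\mu\Per(D)$ is merely an additive constant: if $\mu>0$, finiteness of $f_\mu(D,\vec v)$ forces $\Per(D)<\infty$, while if $\mu=0$ the term vanishes; in both cases it suffices to minimize $m\coloneqq\inf\{\tfrac12\sum_i\Per(E_i)\}$ over the admissible partitions of $D$, and $m<\infty$ by hypothesis, so finite-energy admissible partitions exist. Fix such a minimizing sequence $(E_i^n)_{i=1}^N$: each $E_i^n\subseteq D$ is Borel with $|E_i^n|=v_i$, the $E_i^n$ are essentially disjoint and essentially cover $D$, and $\tfrac12\sum_i\Per(E_i^n)\to m$; since the summands are non-negative, $\sup_n\Per(E_i^n)<\infty$ for every $i$.

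The key observation is that, because $E_i^n\subseteq D$ and $|D|=L^d<\infty$, one has $|E_i^n\setminus B_R|\le|D\setminus B_R|\to0$ as $R\to\infty$, uniformly in $n$, i.e.\ the sequence is equi-tight --- and this is exactly what lets us bypass the ``translate the escaping pieces back by lattice elements'' argument needed in the existence theorems where the fundamental domain, or the lattice, is allowed to vary. Fixing an exhaustion $B_{R_k}\uparrow\R^d$, I would apply the Compactness property on each pre-compact ball $B_{R_k}$ and extract, by a diagonal procedure over $k$ and over the finitely many indices $i$, a subsequence (not relabeled) together with Borel sets $E_1,\dots,E_N$ such that $E_i^n\cap B_{R_k}\to E_i\cap B_{R_k}$ in $L^1$ for all $k$; combined with equi-tightness this upgrades to $\uno_{E_i^n}\to\uno_{E_i}$ in $L^1(\R^d)$.

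Next I would verify that $(E_i)_i$ is admissible. The $L^1(\R^d)$-convergence gives $|E_i|=\lim_n|E_i^n|=v_i$ (no volume leaks to infinity, precisely by equi-tightness) and $|E_i\setminus D|=0$ (since $|E_i^n\cap B_{R_k}\setminus D|=0$ for all $n,k$); passing to a further a.e.\ convergent subsequence and using dominated convergence yields $|E_i\cap E_j|=\lim_n|E_i^n\cap E_j^n|=0$ for $i\ne j$; and $\big|\bigcup_i E_i\big|=\sum_i|E_i|=\sum_i v_i=L^d=|D|$ forces $\big|D\setminus\bigcup_i E_i\big|=0$. Finally, lower semicontinuity closes the argument: by the Semicontinuity and Monotonicity properties, $\Per(E_i)=\sup_k\Per(E_i,B_{R_k})\le\sup_k\liminf_n\Per(E_i^n,B_{R_k})\le\liminf_n\Per(E_i^n)$, so $\tfrac12\sum_i\Per(E_i)\le\liminf_n\tfrac12\sum_i\Per(E_i^n)=m$; since $(E_i)_i$ is admissible the reverse inequality is immediate, and adding back the constant $\mu\Per(D)$ shows $(E_i)_i$ minimizes $f_\mu(D,\vec v)$.

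The only step requiring any care is the compactness/tightness one, and it is easy here solely because $D$ is fixed with $|D|<\infty$; the analogous statements with a free fundamental domain or a free lattice (Theorems \ref{thm:step 2}, \ref{thm:step 3} and \ref{thm:penalized}) will instead require decomposing an almost-minimizer into countably many far-apart pieces and bringing the escaping ones back by lattice translations. For the non-local perimeter nothing changes: we never split the sets $E_i$, so the constant $c\ge1$ in the Almost Subadditivity property is irrelevant, and Semicontinuity, Monotonicity and Compactness are used verbatim as listed.
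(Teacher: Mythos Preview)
Your proof is correct and follows essentially the same direct-method approach as the paper. The only minor variation is that you explicitly observe equi-tightness (from $E_i^n\subseteq D$ with $|D|<\infty$) to upgrade $L^1_{\loc}$ convergence to global $L^1$ convergence and read off $|E_i|=v_i$ directly, whereas the paper stays with $L^1_{\loc}$ convergence, obtains only $|E_i|\le v_i$ by lower semicontinuity, and then forces equality from $\sum_i v_i=|D|$ together with $|D\setminus\cup_i E_i|=0$; both routes are equivalent here.
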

\begin{proof}
Let us write $f_\mu(D)$ for short in this proof dropping the dependence on $\vec v$ which is going to be considered fixed. Also, notice that the arguments would simplify in the case  $\mu=0$.

Since $f_\mu(D)$ is assumed finite, we can consider a minimizing sequence $(E_i^k)$ so that
\[
f_\mu(D) = \lim_{k\uparrow\infty} \mu \Per(D) + \frac12\sum_i\Per(E^k_i).
\]
Possibly passing to a non-relabeled subsequence in $k$, we can assume further that
\[
\sup_{k\in\N} \Per(E_i^k) <\infty,\qquad \forall i.
\]
Recalling that $|E^k_i|=v_i$ for all $k\in\N$, we have by the compactness of sets of finite perimeter (both, in the local and non-local case) and a diagonalization argument the existence of a common non-relabeled subsequence in $k$ so that
\[
E_i^k \to E_i,\qquad \text{in }L^1_{loc}\text{ and for all }i,
\]
as $k\uparrow\infty$, for some Borel sets $E_i$. The rest of the proof is devoted to showing that the partition $(E_i)$ is admissible for $f_\mu(D)$ and it holds
\[
f_\mu(D) = \mu \Per(D)+\frac12 \sum_i\Per(E_i).
\]
The fact that $|E_i^k\cap E^k_j|=0,|D\setminus \bigcup_i E^k_i|=0$ for all $k$ and lower semicontinuity gives that $|E_i\cap E_j|=0,|D\setminus \bigcup_i E_i|=0$, for $i\neq j$ and that  $|E_i|\le v_i$ for all $i$. We claim that actually $|E_i| = v_i$. First of all, notice, by Remark \ref{eq:volume fund domain} and the fact that all $E^k_i$ are disjoint up to negligible sets, that
\[
L^d = |D| = \sum_i |E_i^k| = \sum_i v_i ,\qquad\forall k \in \N.
\]
Thus, combining $|D|=L^d, |D\setminus \bigcup_i E_i|=0$ with $|E_i|\le v_i$, we get the claim. Moreover, possibly intersecting each $E_i$ with $D$ discarding a negligible set by what we just proved, we can guarantee that $E_i\subseteq D$. We thus showed that the partition $(E_i)$ is admissible for $f_\mu(D)$. 

Finally, we prove that $f_\mu(D)$ is attained by $(E_i)$. This easily follows noticing that
\[
f_\mu(D) = \mu \Per(D) + \frac 12 \lim_{k\uparrow\infty}   \sum_i\Per(E_i^k) \ge\mu \Per(D) +\frac 12 \sum_i\Per(E_i),
\]
by lower semicontinuity of the perimeter measure with respect $L^1_{loc}$ convergence.
\end{proof}
Next, we use the above proposition to prove the following existence result. Notice that, combined with the above, the following produces a $G$-periodic partition of $\R^d$ for any given lattice $G$.
\begin{theorem}\label{thm:step 2}
    Let $L>0,\mu \ge 0$ and let $G$ be a lattice in $\R^d$ with $d(G)=L^d$.  Fix $N \in \N$ and a volume vector $ \vec v \coloneqq (v_1,...,v_N)$ so that $v_i>0, \sum_i v_i = L^d$.  Consider the minimization problem
    \begin{align*}
        F_\mu (G,\vec v)&\coloneqq \inf \{f_\mu (D,\vec v)\colon D\subseteq \R^d \emph{ fundamental domain for } G\} \\
        &=\inf_D \inf \left\{ \mu \Per(D) + \frac12\sum_i\Per(E_i) \colon \begin{array}{ll}
        E_i\subseteq D\emph{ Borel}, &|E_i|=v_i \\
         |E_i\cap E_j|=0, &\big|D\setminus \cup_i E_i\big|=0
    \end{array}  \right\},
    \end{align*}
    where $\Per$ is either the local or non-local perimeter. Then, there exists a minimal fundamental domain $D$. 
\end{theorem}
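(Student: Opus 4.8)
The strategy is the direct method: take a minimizing sequence of fundamental domains $D^k$ with associated optimal partitions $(E_i^k)$ (which exist by Proposition~\ref{prop:step 1}), extract an $L^1_{loc}$-limit, and show the limit is again a fundamental domain whose partition realizes $F_\mu(G,\vec v)$. The subtle point, as flagged in the introduction, is that pieces of the $D^k$ may escape to infinity, so compactness must be recovered by translating countably many pieces back by lattice elements.

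\begin{proof}[Proof proposal]
Let $(D^k)$ be a minimizing sequence of fundamental domains for $G$, and by Proposition~\ref{prop:step 1} let $(E_i^k)_{i=1}^N$ be a partition of $D^k$ with $|E_i^k|=v_i$, $|E_i^k\cap E_j^k|=0$, $\big|D^k\setminus\bigcup_i E_i^k\big|=0$ and
\[
\mu\Per(D^k)+\tfrac12\sum_i\Per(E_i^k)\longrightarrow F_\mu(G,\vec v).
\]
In particular the quantities $\Per(D^k)$ and $\Per(E_i^k)$ are uniformly bounded. The first step is a periodic rearrangement: fixing the reference cell $D_G$ from \eqref{eq:parallelepiped}, for each $g\in G$ the set $(D^k+g)\cap D_G$ records the portion of the tiling that, after translation, lands in $D_G$; these are disjoint up to null sets and cover $D_G$, and by $G$-invariance of $\Per$ the total perimeter contribution is unchanged. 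Using almost subadditivity (item i), applied to the partition of $\R^d$ into translates of $D_G$) one controls $\sum_g \Per((D^k+g)\cap D_G, \text{int}\,D_G)$, up to a fixed error proportional to $\sum_g |(D^k+g)\cap D_G|=|D_G|=L^d$ times $\phi$ of the intercell distance; since only finitely many translates of $D_G$ are within bounded distance of $D_G$, this error is finite and, crucially, the perimeter mass concentrated in translates $D_G+g$ with $|g|$ large is summably small. Hence for every $\varepsilon>0$ there is a finite $F_\varepsilon\subseteq G$ such that, after replacing $D^k$ by the $G$-equivalent fundamental domain $\widetilde D^k:=\bigcup_{g\in G}\big((D^k+g)\cap D_G\big)$ (which does not change the value of the functional by $G$-invariance), all but $\varepsilon$ of the perimeter and of the volume of each $E_i^k$ sits inside a fixed bounded neighborhood of $D_G$.

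With the competitors now uniformly localized, the Compactness property of $\Per$ together with a diagonal argument yields a non-relabeled subsequence and Borel sets $E_i$ with $E_i^k\to E_i$ in $L^1_{loc}$ and $E_i\subseteq \overline{D_G+F_1}$, say. Lower semicontinuity of $\Per$ gives $\mu\Per(D)+\tfrac12\sum_i\Per(E_i)\le F_\mu(G,\vec v)$ where $D:=\bigcup_i E_i$. It remains to check admissibility: $|E_i\cap E_j|=0$ and $\big|D\setminus\bigcup_i E_i\big|=0$ pass to the limit trivially; the volume constraint $|E_i|=v_i$ is the delicate point, since a priori $|E_i|\le v_i$ with possible loss of mass. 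Here one uses the $\varepsilon$-localization of the previous step: no volume escapes to infinity, so $\sum_i |E_i|=L^d=\sum_i v_i$, and combined with $|E_i|\le v_i$ this forces equality for every $i$. Finally, one must verify that $D=\bigcup_i E_i$ is itself a fundamental domain for $G$, i.e.\ $|(D+g)\cap D|=0$ for $g\neq\mathrm{id}$ and $|\R^d\setminus\bigcup_g(D+g)|=0$; this follows because $|D|=L^d$ by Remark~\ref{eq:volume fund domain}'s computation in reverse, and the non-overlapping and covering properties are stable under $L^1_{loc}$ limits of the sets $\widetilde D^k$, which are fundamental domains. Thus $(E_i)$ with fundamental domain $D$ attains $F_\mu(G,\vec v)$.
\end{proof}

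\textbf{Main obstacle.} I expect the genuinely delicate part to be the periodic-rearrangement/no-escape-of-mass step: one needs to argue that, after translating pieces of $D^k$ back into the reference parallelepiped $D_G$, the perimeter does not blow up (this is exactly where almost subadditivity with the kernel tail $\phi$ and, in the nonlocal case, the constant $c=2$, must be used carefully) and that the recovered limit set is still a fundamental domain rather than merely a set of volume $L^d$. The volume-constraint preservation $|E_i|=v_i$ hinges entirely on having ruled out escape of mass, so the two difficulties are really one. Everything else — extraction of the $L^1_{loc}$-limit, lower semicontinuity, stability of the disjointness and covering conditions — is routine given the properties listed in Section~\ref{sec:perimeters}.
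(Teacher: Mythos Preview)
Your proposal has a genuine gap at the ``periodic rearrangement'' step, and this gap is exactly the heart of the proof.

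First, observe that your rearranged domain $\widetilde D^k:=\bigcup_{g\in G}\big((D^k+g)\cap D_G\big)$ is simply $D_G$ up to a null set, since $\bigcup_g(D^k+g)=\R^d$. So you are effectively proposing to cut each $E_i^k$ along the translates of $\partial D_G$ and translate the pieces back into $D_G$. The assertion that this ``does not change the value of the functional by $G$-invariance'' is false: $G$-invariance only says that translating a \emph{single} set by a lattice element preserves its perimeter; it says nothing about cut-and-reassemble operations. Cutting $E_i^k$ along $\partial D_G$ and its translates can add uncontrolled new boundary along those faces, so the rearranged partition need not be minimizing. Almost subadditivity controls $\sum_g \Per(E_i^k,\operatorname{int}(D_G+g))$ --- the \emph{relative} perimeter inside the open cells --- but not the full perimeter of the cut pieces.

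Second, and more fundamentally, your tightness claim (``for every $\varepsilon>0$ there is a finite $F_\varepsilon\subseteq G$ such that all but $\varepsilon$ of the perimeter and of the volume sits inside a fixed bounded neighborhood of $D_G$'') is precisely what has to be proved, and it cannot be extracted from a uniform perimeter bound alone. Consider $D^k:=D_G+k e_1$: this is a perfectly good minimizing sequence (with constant energy), yet all of its mass and perimeter sit in the single translate $D_G+k e_1$, which escapes to infinity. No $k$-independent finite $F_\varepsilon$ captures it.

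The paper handles this with a concentration-compactness lemma (from \cite{CesaroniNovaga22}): rather than forcing the competitors back into $D_G$, it identifies, along a subsequence, countably many lattice translations $g_l^k\in G$ with pairwise diverging distances such that $(D^k-g_l^k)\to D_l$ in $L^1_{\loc}$ and $\sum_l|D_l|=L^d$. The limit fundamental domain is then $D:=\bigcup_l D_l$, which may consist of countably many pieces scattered in $\R^d$. The perimeter lower bound for this reassembled set is obtained via a separate semicontinuity lemma (Lemma~\ref{lem:semicontinuity}) exploiting that the translates $g_l^k$ diverge. This is the missing idea in your argument: you must \emph{follow} the escaping pieces with moving centers, not try to pin them down in a fixed window.
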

\begin{proof}
    As before, we write $ F_\mu (G),f_\mu(D)$ for short considering $\vec v$ fixed and we point out that the case $\mu=0$ would simplify the following arguments.

    First, we show that $F_\mu(G)$ is finite. Let us consider the fundamental domain $D_G$ as a competitor in the definition of $F_\mu(D)$ (i.e.\ the parallelepiped \eqref{eq:parallelepiped}). We will now construct iteratively an admissible partition $(E_i)$ of $D_G$. Observe that the mapping $\R \ni t \mapsto v(t)\coloneqq |D_G\cap \{ x_n\le t\}|$ is continuous, monotone increasing, non-negative and bounded above by $d(G)=L^d$. Thus, we define iteratively
    \begin{align*}
       & E_1 \coloneqq D_G\cap \{ x_n\le t_1\} && \text{for $t_1 \in \R$ so that }v(t_1) =v_1,\\
       &E_i \coloneqq D_G\cap \{ t_{i-1}<x_n\le t_i\}&& \text{for $t_i \in \R$ so that }v(t_i)-\sum_{j<i}v_j = v_i,
    \end{align*}
    for all $i=2,...,N$. By construction, we clearly have $E_i\subseteq D_G$, $E_i\cap E_j =\emptyset$ and $|E_i|=v_i$ as well as $D_G=\cup_i E_i$. Hence, the partition $(E_i)$ is admissible and satisfies
    \begin{equation}
    F_\mu(G)\le f_\mu(D_G) \le \mu \Per(D_G) + \frac{1}{2}\sum_i\Per(E_i) \le \Big(\mu + \frac{N}{2}\Big)\Per(D_G) <\infty,\label{eq:C mu N}
    \end{equation}
    as desired.
    
    We can now consider a minimizing sequence $(D^k)$  of fundamental domains for the action of $G$ so that
    \[
    f_\mu(D^k)\to F_\mu(G),\qquad\text{as }k\uparrow\infty.
    \]
    In particular, up to a not relabeled subsequence, we might suppose that $f_\mu(D^k)$ is finite for all $k\in\N$.  We can thus apply Proposition \ref{prop:step 1} to deduce the existence of a partition $(E_i^k)$ so that
    \[
    f(D^k) = \mu \Per(D^k)+\frac 12\sum_i\Per(E^k_i),
    \]
    satisfying all the listed properties in the definition of $f_\mu(\cdot)$ and, by what previously said, also
    \begin{equation}
        L^d = |D^k|,\qquad \sup_{k\in\N}\Per(D^k) <\infty,\qquad \sup_{k\in\N}\Per( E^k_i) <\infty,\quad \forall i\label{eq:bounds}
    \end{equation}

    We are in the position to invoke the concentration compactness result \cite[Lemma 3.4]{CesaroniNovaga22} (observed that both the local and non-local perimeter treated here are covered, cf. \cite[Section 2.1]{CesaroniNovaga22}) to deduce, up to a further non relabeled subsequence in $k$, the existence of $(g_l^k)\subseteq G$ with $g_l^k\neq g_m^k$ for $m\neq l$ and all $k\in\N$ and sets $(D_l)\subseteq \R^d$ satisfying 
    \[
    \sum_l|D_l|=L^d,\qquad (D^k -g_l^k) \to D_l,\quad\text{ in $L^1_{loc}$ as }k\uparrow\infty \text{ and for all }l.
    \]
    From \cite[Lemma 3.4]{CesaroniNovaga22}  we also have the property that ${\rm dist}(K+g^k_l,K +g_m^k) \to \infty$ for each $K\subseteq \R^d$ compact and $m\neq l$. By a diagonalization argument and precompactness of sets of finite perimeters guaranteed  by \eqref{eq:bounds} and $|E^k_i|=v_i$ for all $k$, we can pass to a common further subsequence in $k$ so that
    \[
    (E^k_i - g_l^k ) \to E_{i,l},\qquad\text{ in $L^1_{loc}$ as }k\uparrow\infty\text{ and for all }i,l,
    \]
    for some Borel sets $(E_{i,l})\subseteq \R^d$. Set
    \[
    E_i \coloneqq \bigcup_l E_{i,l},\quad \text{for all } i,\qquad D\coloneqq \bigcup_l D_l. 
    \]
    Clearly, up to discard negligible sets, we can suppose $E_{i,l}\subseteq D_l$. We now  make three claims:
    \begin{itemize}
        \item[{\rm i)}] $D$ is a fundamental domain for the action of $G$;
        \item[{\rm ii)}] $|E_i|=v_i$ for all $i$;
        \item[{\rm iii)}] $|E_i \cap E_j|=0$ for $i\neq j$ and $|D\setminus \bigcup_iE_i|=0$.
    \end{itemize}
    We first show i). Thanks to the fact that $g_l^k\neq g_m^k$ for $l\neq m$ and for all $k$, we have that $|D_l\cap D_m| =0$. This is a simple consequence of the fact that $|(D^k -g_l^k) \cap (D^k -g_m^k) | =0$ for every $k$, using that $D^k$ is a fundamental domain and the lower semicontinuity of volumes.  Moreover, again by lower semicontinuity, it holds for any $g\in G,g \neq {\rm id}$ that $|D \cap (D+g) | \le \liminf_{k\uparrow\infty}|D^k \cap (D^k+g)|=0$ since $D^k$ is a fundamental domain and similarly, using that $|D^k|= L^d= \sum_l|D_l|$, it holds $|\R^d \setminus \bigcup_{g\in G}(D+g)| =0$. This concludes the proof of i).

    We now show iii). For every $i\neq j$ we have 
    \begin{align*}
        | E_i \cap E_j|&\le \sum_{l,m} |E_{i,l}\cap E_{j,m}| \le\liminf_{k\uparrow\infty}  \sum_{l,m}  \big|  (E^k_i - g_l^k )\cap  (E^k_j - g_m^k )\big|\\
        &=\liminf_{k\uparrow\infty}  \sum_{l\neq m}  \big|  (E^k_i - g_l^k )\cap  (E^k_j - g_m^k )\big|\\
        &\le\liminf_{k\uparrow\infty}  \sum_{l\neq m} |(D^k -g_l^k) \cap (D^k -g_m^k) | =0,
    \end{align*}
    having used: in the second inequality the lower semicontinuity of volumes, Fatou's lemma and the fact that the intersection of $L^1_{loc}$-converging sets is again $L^1_{loc}$ converging; in the middle identity that, since $(E^k_i)$ is a partition of $D^k$, then $ \big|  (E^k_i - g_l^k )\cap  (E^k_j - g_l^k )\big|=0,$ when $i\neq j$ for all $k$; in the last inequality a simple set inclusion; finally, the last sum vanishes, as $D^k$ is a fundamental domain and $g_l^k \neq g_m^k$. By similar lower semicontinuity arguments using that $|D^k\setminus \cup_i E^k_i|=0$ we complete the proof of iii).
    
    We now prove ii). For every $i$, we can estimate thanks to the above
    \begin{align*}
         |E_i| &\le \sum_l |E_{i,l}|  = \sum_l |E_{i,l}\cap D|  \le \liminf_{k\uparrow\infty}\sum_l|(E_i^k -g^k_l)\cap D)| \le \liminf_{k\uparrow\infty}\sum_l|E_i^k \cap (D +g_l^k)|\\
         &\le \liminf_{k\uparrow\infty}\sum_{g \in G}|E_i^k \cap (D +g)| = \liminf_{k\uparrow\infty}|E_i^k|=v_i,       
    \end{align*}
    having also used that $E_{i,l}\subseteq D$, lower semicontinuity and that $D$ is fundamental. In particular, recalling by iii) that $|D \setminus \cup_i E_i|=0$ and $|D|=L^d$, we necessarily deduce that $|E_i|=v_i$ for all $i$.

    All in all, we found a partition $(E_i)$ of the fundamental domain $D$ which is admissible for $f_\mu(\cdot)$.  To conclude, we only need to show that $F_\mu(G)$ is attained by $D$. Finally, this follows by the lower semicontinuity result in Lemma \ref{lem:semicontinuity} below, applied several times with $G^k\equiv G$ for $D^k$ and for $E_i^k$ for each $i$.
\end{proof}
\subsection{Minimizing lattice tilings}
\begin{theorem}\label{thm:step 3}
    Let $L>0,\mu\ge 0$.  Fix $N \in \N$ and a volume vector $ \vec v \coloneqq (v_1,...,v_N)$ so that $v_i>0, \sum_i v_i = L^d$.  Consider the minimization problem
    \begin{align*}
         F_* &\coloneqq \inf \{F_\mu(G,\vec v) \colon G \text{ lattice with }d(G)=L^d \}\\
        &=\inf_G \inf_D \inf \left\{ \mu \Per(D) + \frac12\sum_i\Per(E_i) \colon \begin{array}{ll}
        E_i\subseteq D\emph{ Borel}, &|E_i|=v_i \\
         |E_i\cap E_j|=0, &\big|D\setminus \cup_i E_i\big|=0
    \end{array}  \right\},
    \end{align*}
    where $\Per$ is either the anisotropic local or non-local perimeter.

    Then, there exist a minimizer lattice $G$, a fundamental domain $D$ for the action of $G$ minimizing $F_\mu(G,\vec v)$ and a partition of $(E_i)$ minimizing $f_\mu(D,\vec v)$, hence satisfying
    \[
    F_* = \mu \Per(D) +\frac 12 \sum_i \Per(E_i).
    \]
\end{theorem}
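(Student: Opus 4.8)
The plan is to run the direct method in the lattice variable, using Theorem~\ref{thm:step 2} and Proposition~\ref{prop:step 1} to resolve the two inner minimizations along the way. (As in the previous proofs, the case $\mu=0$ simplifies the bookkeeping.) First, $F_*<\infty$: taking the cubic lattice $G_0\coloneqq L\,\Z^d$, which has $d(G_0)=L^d$, the competitor bound \eqref{eq:C mu N} gives $F_*\le F_\mu(G_0,\vec v)<\infty$. Next choose a minimizing sequence of lattices $(G^k)$, $d(G^k)=L^d$, $F_\mu(G^k,\vec v)\to F_*$, and, using Theorem~\ref{thm:step 2} and Proposition~\ref{prop:step 1}, attach to each $G^k$ an optimal fundamental domain $D^k$ with an optimal partition $(E^k_i)$, so that
\[
F_\mu(G^k,\vec v)=\mu\Per(D^k)+\tfrac12\sum_i\Per(E^k_i),\qquad |E^k_i|=v_i,\quad |D^k|=L^d,
\]
and, along a non-relabelled subsequence, $\sup_k\Per(D^k)<\infty$ and $\sup_k\Per(E^k_i)<\infty$ for all $i$.

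The heart of the matter — and the step I expect to be the main obstacle — is an a priori non-degeneracy estimate: writing $\lambda(G)$ for the length of a shortest nonzero vector of $G$, I claim $\delta\coloneqq\inf_k\lambda(G^k)>0$. The mechanism is that a \emph{thin} fundamental domain is forced to have large perimeter. Suppose $\lambda(G^k)\to0$ along a subsequence, with $g^k\in G^k\setminus\{0\}$, $\varepsilon_k\coloneqq|g^k|\to0$. Since $D^k$ is a fundamental domain for $G^k$ and $m g^k\in G^k\setminus\{0\}$ for every $m\in\Z\setminus\{0\}$, slicing $D^k$ by lines parallel to $g^k$ and using $|D^k\cap(D^k+mg^k)|=0$ shows that almost every such line meets $D^k$ — hence also $E^k_i\subseteq D^k$ — in a set of $\HH^1$-measure at most $\varepsilon_k$. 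Therefore every $E^k_i$ has all one-dimensional slices in the direction $e^k\coloneqq g^k/\varepsilon_k$ of length $\le\varepsilon_k$; slicing the reduced boundary in this direction (a bounded nonempty $1$-D section of a set of finite perimeter has at least two boundary points) gives, with $c_\varphi\coloneqq\min_{|\nu|=1}\varphi(\nu)>0$,
\[
\Per_\varphi(E^k_i)\ \ge\ c_\varphi\!\int_{\partial^*E^k_i}|\nu_{E^k_i}\cdot e^k|\,\d\HH^{d-1}\ \ge\ 2\,c_\varphi\,\HH^{d-1}\!\big(\{y:\HH^1(E^k_i\cap\ell_y)>0\}\big)\ \ge\ \frac{2\,c_\varphi\,v_i}{\varepsilon_k},
\]
the last step since $v_i=|E^k_i|\le\varepsilon_k\,\HH^{d-1}(\{y:\HH^1(E^k_i\cap\ell_y)>0\})$. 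For the non-local perimeter one argues similarly: the slice condition forces $|B_{r}(x)\setminus E^k_i|\ge c_d\,r^d$ for $r=R_d\varepsilon_k$ (with $R_d$ a dimensional constant), so property c) of $K$ gives $\int_{\R^d\setminus E^k_i}K(x-y)\,\d y\ge c\,\varepsilon_k^{-s}$ for every $x\in E^k_i$, hence $\Per_K(E^k_i)\ge c\,\varepsilon_k^{-s}v_i$. In both cases, summing over $i$ and using $\sum_i v_i=L^d$ yields $\sum_i\Per(E^k_i)\to\infty$, contradicting $F_\mu(G^k,\vec v)\to F_*<\infty$. Hence $\delta>0$.

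With $d(G^k)=L^d$ fixed and $\lambda(G^k)\ge\delta$, Mahler's compactness criterion applies: up to a subsequence $G^k\to G$ for a lattice $G$ with $d(G)=L^d$. Concretely, Minkowski-reduced bases $(e^k_i)$ of $G^k$ satisfy $\prod_j|e^k_j|\le c_d\,d(G^k)=c_d L^d$ and $|e^k_j|\ge\delta$, hence $|e^k_i|\le c_d L^d\delta^{-(d-1)}$; passing to a further subsequence $e^k_i\to e_i$ with $|\det(e_i)|=L^d$, and we set $G\coloneqq\bigoplus_i\Z e_i$. It remains to extract a fundamental domain $D$ for $G$ with an admissible partition realizing the limiting energy. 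For this I would essentially repeat the concentration–compactness extraction of the proof of Theorem~\ref{thm:step 2}, applying \cite[Lemma~3.4]{CesaroniNovaga22} to $(D^k)$ and, after diagonalization, to each $(E^k_i)$: one obtains translations $(g^k_l)\subseteq G^k$ whose translates pairwise escape to infinity, together with $L^1_{loc}$-limits $D^k-g^k_l\to D_l$, $E^k_i-g^k_l\to E_{i,l}$, and one sets $D\coloneqq\bigcup_l D_l$, $E_i\coloneqq\bigcup_l E_{i,l}$. Claims i)–iii) of that proof carry over; the only new point is that, to verify $|D\cap(D+g)|=0$ for $g\in G\setminus\{0\}$, one writes $g=\lim_k g^k$ with $g^k\in G^k$ (possible because $e^k_i\to e_i$), uses $|D^k\cap(D^k+g^k)|=0$ together with the escape of the $g^k_l$, and passes to the limit by lower semicontinuity of the volume; likewise $\sum_l|D_l|=L^d=d(G)$ upgrades the packing to a tiling, so $D$ is a fundamental domain for $G$ and $(E_i)$ is admissible for $f_\mu(D,\vec v)$.

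Finally, the lower semicontinuity result in Lemma~\ref{lem:semicontinuity}, applied this time with the lattices $G^k\to G$ genuinely varying (and to $D^k$ and to each $E^k_i$), yields
\[
F_*=\lim_k\Big(\mu\Per(D^k)+\tfrac12\sum_i\Per(E^k_i)\Big)\ \ge\ \mu\Per(D)+\tfrac12\sum_i\Per(E_i)\ \ge\ F_\mu(G,\vec v)\ \ge\ F_*,
\]
so equality holds throughout and $G$, $D$, $(E_i)$ are the claimed minimizers. Apart from the thin-domain perimeter estimate of the second paragraph — which is exactly what is needed to confine a minimizing sequence of lattices to a region where Mahler's criterion bites — every ingredient is an adaptation of the compactness machinery already developed for Theorem~\ref{thm:step 2}.
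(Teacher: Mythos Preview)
Your argument is essentially correct, but it follows a different route from the paper. The paper does not isolate a non-degeneracy estimate for the lattices at all: it simply invokes \cite[Lemma~3.1]{CesaroniFragalaNovaga23} (and \cite[Lemma~3.2, Theorem~3.3]{CesaroniNovaga23_1} for the anisotropic case), which packages together both the Kuratowski compactness of the lattices $G^k\to G$ with $d(G)=L^d$ \emph{and} the concentration--compactness extraction $(g^k_l)\subseteq G^k$, $D^k-g^k_l\to D_l$, including the statement that $D=\cup_l D_l$ is already a fundamental domain for the limit $G$. Everything after that citation is then a verbatim replay of the admissibility checks i)--ii) and Lemma~\ref{lem:semicontinuity}, exactly as in your final two paragraphs.

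Your second paragraph is thus the interesting deviation: you prove by a direct slicing/thin-domain argument that $\inf_k\lambda(G^k)>0$, then appeal to Mahler's criterion to get $G^k\to G$. This is precisely the mechanism buried inside \cite[Theorem~2.10, Lemma~3.1]{CesaroniFragalaNovaga23}, and your explicit derivation makes transparent \emph{where} the perimeter bound is actually spent (preventing lattice collapse). The perimeter blow-up estimates for both $\Per_\varphi$ and $\Per_K$ are correct as written. One small point to tighten: you then cite \cite[Lemma~3.4]{CesaroniNovaga22} for the concentration--compactness step, but that lemma is stated for a \emph{fixed} lattice; with varying $G^k$ you need either the version in \cite[Lemma~3.1]{CesaroniFragalaNovaga23} or a remark that the proof only uses uniform discreteness of the $G^k$ (which your bound $\lambda(G^k)\ge\delta$ supplies). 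Your sketch of how to check $|D\cap(D+g)|=0$ by approximating $g\in G$ with $g^k\in G^k$ is the right idea and goes through once one notes that, for $l\neq m$, $g^k_l-g^k_m$ escapes to infinity and hence cannot equal the bounded $g^k$ for large $k$.
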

\begin{proof}
     As before, we write $ F_\mu (G),f_\mu(D)$ for short considering $\vec v$ fixed.

     We consider a minimizing sequence of lattices $(G^k)$ so that $F_\mu(G_k)\to F_*$ as $k\uparrow\infty$. By Theorem \ref{thm:step 2}, we infer the existence of a fundamental domain minimizer $D$ so that $F_\mu(G^k)=f_\mu(D^k)$. By Proposition \ref{prop:step 1}, there exists then a partition $(E^k_i)$ of $D$ satisfying
    \[
    \mu\Per(D^k)+\frac 12\sum_i\Per(E_i^k)\to F_*,\qquad \text{as }k\uparrow\infty.
    \]
    In particular, up to a non-relabeled subsequence, we have the following uniform bounds
    \begin{equation}
        L^d = d(G^k)=|D^k|,\qquad \sup_{k\in\N}\Per(D^k) <\infty,\qquad \sup_{k\in\N}\Per( E^k_i) <\infty,\quad \forall i\label{eq:bounds lattices}
    \end{equation}
    The strategy is now similar to that already employed in the proof of  Theorem \ref{thm:step 2}. We repeat the arguments to handle the situation where $G^k$ also varies.

    We are in the position to invoke the concentration compactness result \cite[Lemma 3.1]{CesaroniFragalaNovaga23} to deduce, up to a non-relabeled subsequence, the existence of a lattice $G$ with $d(G)=L^d$, of $(g_l^k)_l \subseteq G^k$ and of Borel sets $D_l\subseteq \R^d$ so that,
    \[
     G_k \to G,\qquad \text{in the sense of Kuratowski},
    \]
    and
    \begin{align*}
        \sum_l|D_l|= L^d,\qquad D^k-g_l^k \to D_l\quad\text{in }L^1_{loc}\text{ as $k\uparrow\infty$ for all $l$}. 
    \end{align*}
    Furthermore (by \cite[Lemma 3.1]{CesaroniFragalaNovaga23} again), we have that $|D_l\cap D_m|=0$ and $D\coloneqq \cup_lD_l$ is a fundamental domain for the limit lattice $G$ and, for every $K\subseteq \R^d$ compact, it holds  ${\rm dist}(K+g^k_l,K +g_m^k) \to \infty$ for all $m\neq l$ as $k\uparrow\infty$. Actually, the above conclusions are stated in \cite{CesaroniFragalaNovaga23} for the non-local perimeter only. However, the same conclusion holds in the case of anisotropic local perimeter by \cite[Lemma 3.2,]{CesaroniNovaga23_1} combined with the arguments already developed in \cite[Theorem 3.3]{CesaroniNovaga23_1}. We can thus proceed with the proof.

    By a diagonalization argument and precompactness of sets of finite perimeters guaranteed  by \eqref{eq:bounds lattices} and $|E^k_i|=v_i$, we can pass to a common further subsequence in $k$ so that
    \[
    (E^k_i - g_l^k ) \to E_{i,l},\qquad\text{ in $L^1_{loc}$ as }k\uparrow\infty\text{ and for all }i,l,
    \]
    for some Borel sets $(E_{i,l})\subseteq \R^d$. Set now
    \[
     E_i \coloneqq \bigcup_l E_{i,l},\quad \forall i.
    \]
    We now claim the following:
    \begin{itemize}
        \item[{\rm i)}] $|E_i|=v_i$ for all $i$;
        \item[{\rm ii)}] $|E_i \cap E_j|=0$ for $i\neq j$ and $|D\setminus \bigcup_iE_i|=0$.
    \end{itemize}
    Let us show ii) first. For every $i\neq j$ we have 
    \begin{align*}
        | E_i \cap E_j|&\le \sum_{l,m} |E_{i,l}\cap E_{j,m}| \le\liminf_{k\uparrow\infty}  \sum_{l,m}  |  (E^k_i - g_l^k )\cap  (E^k_j - g_m^k )|\\
        &=\liminf_{k\uparrow\infty}  \sum_{l\neq m}  |  (E^k_i - g_l^k )\cap  (E^k_j - g_m^k )|\\
        &\le\liminf_{k\uparrow\infty}  \sum_{l\neq m} |(D^k -g_l^k) \cap (D^k -g_m^k) | =0,
    \end{align*}
    having used: in the second inequality the lower semicontinuity of volumes, Fatou's lemma and the fact that the intersection of $L^1_{loc}$-converging sets is again $L^1_{loc}$ converging; in the middle identity that, since $(E^k_i)$ is a partition of $D^k$, then $ |  (E^k_i - g_l^k )\cap  (E^k_j - g_l^k )|=0,$ when $i\neq j$ for all $k$; in the last inequality a simple set inclusion; finally, the last sum vanishes, as $D^k$ is a fundamental domain and $g_l^k \neq g_m^k$. By similar arguments, using that $|D^k\setminus \cup_i E^k_i|=0$, ii) follows.

    We now prove i). For every $i$, we can estimate thanks to the above
    \begin{align*}
         |E_i| &\le \sum_l |E_{i,l}|  = \sum_l |E_{i,l}\cap D|  \le \liminf_{k\uparrow\infty}\sum_l|(E_i^k -g^k_l)\cap D)| \\
         &\le \liminf_{k\uparrow\infty}\sum_l|E_i^k \cap (D +g_l^k)|\le \liminf_{k\uparrow\infty}\sum_{g \in G}|E_i^k \cap (D +g)| = \liminf_{k\uparrow\infty}|E_i^k|=v_i,       
    \end{align*}
    having also used that $E_{i,l}\subseteq D$, lower semicontinuity and that $D$ is fundamental. In particular, recalling by ii) that $|D \setminus \cup_i E_i|=0$ and $|D|=L^d$, we necessarily deduce that $|E_i|=v_i$ for all $i$.
    
    We found a partition $(E_i)$ of the fundamental domain $D$ for the lattice $G$ which is admissible for $f_\mu(\cdot)$.  We next show that $F_*$ is attained by $G$. This follows by the lower semicontinuity result in Lemma \ref{lem:semicontinuity} below  applied several times for $D^k$ and for $E_i^k$ for each $i$ giving in turn
    \[
    \mu \Per(D)+\frac 12 \sum_i \Per(E_i)\le \liminf_kf_\mu(D^k) = \lim_kF_\mu(G^k) = F_*.
    \]
\end{proof}
\begin{lemma}\label{lem:semicontinuity}
    Let $(G^k)$ be a sequence of lattices in $\R^d$ and $(E^k)$ a sequence of Borel sets. Suppose there are $(g^k_l)_l\subseteq G^k$ so that $\dist(K+ g^k_l,K+g^k_m)\to \infty$ as $k\uparrow\infty$ for every $K\subseteq \R^d$ compact, for all $l\neq m$. Suppose there are Borel sets $E_l$ so that
    \[
        E^k-g_l^k \to E_l,\qquad \text{in $L^1_{loc}$ as $k\uparrow\infty$ and for all  $l$.}
    \]
    Then, if $\Per$ is either the local or non-local perimeter, it holds
    \[
     \Per(\cup_l E_l)\le \liminf_{k\uparrow\infty} \Per(E^k).
    \]
\end{lemma}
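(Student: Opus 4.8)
The plan is to localize the perimeter on large balls, exploit the spreading-to-infinity of the translates $g_l^k$, and use the semicontinuity plus almost-subadditivity axioms listed in Section~\ref{sec:perimeters}. First I would fix $R>0$ and, for each finite index set $\{1,\dots,M\}$, consider the balls $B_R(g_l^k)$ for $l=1,\dots,M$. Since $\dist(K+g_l^k,K+g_m^k)\to\infty$ for every compact $K$ and all $l\neq m$, for $k$ large enough these balls are pairwise disjoint; hence, by monotonicity of $\Per$ (applied to the open set $\bigcup_{l\le M}B_R(g_l^k)\subseteq\R^d$) and then by the left inequality in almost-subadditivity, one gets
\[
\sum_{l=1}^M \Per\big(E^k, B_R(g_l^k)\big)\le \Per\big(E^k,\textstyle\bigcup_{l\le M}B_R(g_l^k)\big)+\sum_{l=1}^M|E^k\cap B_R(g_l^k)|\,\phi\big(\min_{m\neq l}\dist(B_R(g_l^k),B_R(g_m^k))\big)\le \Per(E^k)+\varepsilon_k,
\]
where the error term $\varepsilon_k\to 0$ since $\phi(t)\to 0$ as $t\to\infty$ (in the local case $\phi\equiv0$ and this is immediate) and $|E^k\cap B_R(g_l^k)|\le|B_R|$ is uniformly bounded; here I should be slightly careful that only finitely many terms $l\le M$ appear, so the sum of the errors is a finite sum of quantities each going to $0$.

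Next I would translate each term: by $G$-invariance of $\Per$ we have $\Per(E^k,B_R(g_l^k))=\Per(E^k-g_l^k,B_R(0))$, and since $E^k-g_l^k\to E_l$ in $L^1_{loc}$, semicontinuity of the relative perimeter on the fixed open set $B_R(0)$ gives $\Per(E_l,B_R(0))\le\liminf_{k}\Per(E^k-g_l^k,B_R(0))$. Combining with the displayed inequality and passing to the liminf in $k$ yields
\[
\sum_{l=1}^M \Per\big(E_l,B_R(0)\big)\le \liminf_{k\uparrow\infty}\Per(E^k).
\]
Now I would let $R\uparrow\infty$: by the monotone-convergence part of the monotonicity axiom, $\Per(E_l,B_R(0))\uparrow\Per(E_l,\R^d)=\Per(E_l)$, so $\sum_{l=1}^M\Per(E_l)\le\liminf_k\Per(E^k)$; then letting $M\uparrow\infty$ gives $\sum_l\Per(E_l)\le\liminf_k\Per(E^k)$.

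Finally I would pass from the sum $\sum_l\Per(E_l)$ to $\Per(\bigcup_l E_l)$. The sets $E_l$ live in the pairwise-almost-disjoint pieces $D_l$ (this is available from the construction in Theorems~\ref{thm:step 2} and~\ref{thm:step 3}, but in the abstract statement here one must argue directly): from $\dist(B_R(g_l^k),B_R(g_m^k))\to\infty$ one sees the $E_l$ are ``infinitely far apart'' in the sense that they can be separated by disjoint open sets $\Omega_l$ with $\dist(\Omega_l,\Omega_m)$ as large as desired, and applying the almost-subadditivity inequality (right-hand side this time, with the error controlled by $\sum_l|E_l|\,\phi(\dist)$, which is finite and small since $\sum_l|E_l|\le\sum_l|D_l|=L^d<\infty$ and $\phi\to0$) gives $\Per(\bigcup_l E_l)\le\sum_l\Per(E_l)+o(1)$, hence $\Per(\bigcup_l E_l)\le\sum_l\Per(E_l)\le\liminf_k\Per(E^k)$, as claimed. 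The main obstacle I anticipate is the bookkeeping in this last step for the non-local perimeter: unlike the local case where disjoint far-apart pieces have additive perimeter exactly, here one genuinely needs the quantitative tail bound $\phi(t)=\int_{\R^d\setminus B_t(0)}K\,\d x\to 0$ together with the summability $\sum_l|E_l|<\infty$ to kill the cross-interaction terms, and one has to set up the separating open sets $\Omega_l\supseteq E_l$ carefully so that the relevant distances all tend to infinity simultaneously.
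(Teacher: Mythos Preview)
Your overall strategy---localize to balls $B_R(g_l^k)$, use their disjointness for large $k$, apply $G$-invariance and lower semicontinuity, then pass to the limit in $R$ and $M$---matches the paper's proof. The difference lies in the order of the last steps and, crucially, in your justification of the passage from $\sum_l\Per(E_l)$ to $\Per(\bigcup_l E_l)$.

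That final step contains a genuine error. You claim that from $\dist(B_R(g_l^k),B_R(g_m^k))\to\infty$ one can conclude the limit sets $E_l$ are ``infinitely far apart'' and hence can be enclosed in disjoint open sets $\Omega_l$ with large mutual distance. This is false: each $E_l$ is the $L^1_{loc}$-limit of $E^k-g_l^k$, i.e.\ of the sequence $E^k$ \emph{translated back to the origin}, so all the $E_l$ live in the same region of space. Nothing in the hypotheses prevents them from overlapping; in the applications they are essentially disjoint pieces of a common bounded fundamental domain, certainly not far-apart sets. Your proposed use of almost-subadditivity with separating $\Omega_l$ therefore cannot be carried out (and note also that the almost-subadditivity axiom concerns one set $E$ and several open sets $\Omega_i$, not several sets $E_l$, so even the formal setup does not match). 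The appeal to $\sum_l|E_l|\le L^d$ likewise imports information not present in the lemma's hypotheses.

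The paper sidesteps this by inserting the passage to the union \emph{before} sending $M$ and $R$ to infinity: from $\sum_{l=1}^M\Per(E_l,B_R(0))$ it uses the elementary set-wise subadditivity
\[
\Per\Big(\bigcup_{l=1}^M E_l,\,B_R(0)\Big)\le\sum_{l=1}^M\Per\big(E_l,B_R(0)\big),
\]
valid for both the local and non-local perimeter without any disjointness or separation assumption on the $E_l$; then it lets $M\to\infty$ using that $\bigcup_{l\le M}E_l\to\bigcup_l E_l$ monotonically together with lower semicontinuity on the fixed ball $B_R(0)$, and finally sends $R\to\infty$ by monotonicity. Your argument is easily repaired by the same device: replace the ``far apart'' reasoning with the one-line observation $\Per(\bigcup_l E_l)\le\sum_l\Per(E_l)$, obtained from finite subadditivity plus lower semicontinuity along the increasing family $\bigcup_{l\le M}E_l$.
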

\begin{proof}
    If the right-handmost side is infinite, there is nothing to prove. Suppose it is then finite. Let $B \coloneqq B_R(0)$ for $R>0$ and for every $M,k\in\N$, we can suppose therefore the existence of a $k_0\coloneqq k_0(M,R)>0$ so that $ (B+g_l^k)\cap (B+g_m^k)=\emptyset$ or every $k\ge k_0$  and $\dist(B+ g^k_l,B+g^k_m)\to \infty$ for every $l\neq m,$ with $l,m \in \{1,...,M\}$. Thus, by monotonicity, additivity of the perimeter measure on disjoint sets and $G$-invariance, we have
    \begin{align*}
        \Per(E^k) &\ge \Per(E^k,\cup_{l=1}^M(B+g^k_l)) \\
        &\ge \sum_{l=1}^M\Per(E^k-g_k^l,B) - \sum_{l=1}^M |E^k \cap ( B+g^k_l)  |\min_{\substack{m\neq l,\\ m \in (1,...,M)}} \phi({\rm dist}(B+g^k_m,B+g^k_l )),
    \end{align*}
    for all $k\ge k_0$, where $\phi(\cdot)$ can be taken to zero in the anisotropic case. Observe that in the non-local case, we have $\phi( {\rm dist}(B+g^k_m,B+g^k_l ) )\to 0$ as $k\uparrow \infty$ for all $m\neq l\le M$, by definition of $\phi(\cdot)$ (recall Section \ref{sec:perimeters}).  We can thus take the limit as $k\uparrow\infty$ and estimate for every $M\in\N$
    \begin{align*}
        \liminf_{k\uparrow\infty}  \Per(E^k)  \ge \sum_{l=1}^M \liminf_{k\uparrow\infty}\Per(E^k-g^k_l,B) \ge \sum_{l=1}^M\Per(E_l,B) \ge \Per(\cup_{l=1}^ME_l,B),
    \end{align*}
    by Fatou and lower semicontinuity with respect $L^1_{loc}$-convergence. Finally, using that  $\cup_{l=1}^M E_{l} \to \cup_l E_{l}$  monotonically as $M\uparrow\infty$, we can first send $M$ to infinity using again lower semicontinuity and then conclude by monotonicity on $B=B_R(0)$ sending afterwards $R$ to infinity.
\end{proof}
We discuss an example where a minimal partition can be exactly found.
\begin{remark}
    \rm
    Let $\varphi \colon \R^d \to [0,\infty)$ be a norm and let $\Per_\varphi$ be the associated anisotropic perimeter. Denote by $W\subseteq \R^d$ the Wulff shape with $|W|=1$ minimizing the perimeter among sets with unit volume. 

    Let $L>0,\mu = 0$.  Fix $N \in \N$ and a volume vector $ \vec v \coloneqq (v_1,...,v_N)$ so that $v_i>0, \sum_i v_i = L^d$. Suppose that there exist a lattice $G$, a fundamental domain $D$ for the action of $G$, elements  $g_i \in G$ and scalings $\lambda_i>0$ so that for all $i,j=1,...,N$ with $i\neq j$
    \[
    |\lambda_i W| = v_i,\quad |D \setminus  \cup_{i=1}^N (\lambda_i W +g_i)|=0,\quad |(\lambda_i W + g_i)\cap (\lambda_j W + g_j)|=0. 
    \]
    Then, $G, D$ and $E_i \coloneqq \lambda_i W +g_i$ is a minimizer in Theorem \ref{thm:step 2} and Theorem \ref{thm:step 3}.  This clearly follows by observing that, for any other competitor $\tilde G,\tilde D,(\tilde E_i)$ with $|\tilde E_i|=v_i$, we have
    \[
    \sum_i \Per_\varphi(\tilde E_i) \ge \sum_i \Per_\varphi( \lambda_i W + g_i).
    \]
    
Finally, a concrete example is the case of $\varphi (\nu ) = \sum_{j=1}^d|\nu_j|$ for every $\nu \in \R^d$ and $\vec v$ so that a partition $(E_i)$ of the cube $L\cdot Q$ exists with each $E_i$ being a translation and dilation of the unit cube $Q$. As $W=Q$ is the Wulff shape associated with this norm with unit volume, the above discussion applies.  Here, the relevant fact is that it is possible to tile the space with squares, and each square is of minimal perimeter for its volume.
\end{remark}

\subsection{A penalized problem}
In this part, we study a penalized version of the minimization problems already faced by relaxing the volume constraint. 
\begin{theorem}\label{thm:penalized}
    Let $L>0, \mu\ge 0,\lambda >0$.  Fix $N \in \N$ and a volume vector $ \vec v \coloneqq (v_1,...,v_N)$ so that $v_i>0, \sum_i v_i = L^d$.  Consider the minimization problem
    \[
        P_*\coloneqq  \inf_G\inf_D \inf \left\{ \mu \Per(D)+ \sum_i\frac 12 \Per(E_i) + \lambda \big| |E_i|-v_i\big|  \colon \begin{array}{l}
        E_i\subseteq D\emph{ Borel} \\ |E_i\cap E_j|=0 \\
        \big|D\setminus \cup_i E_i\big|=0
    \end{array}  \right\},
    \]
     where the last two infima are respectively taken among all lattices $G$ with $d(G^k)=L^d$ and all fundamental domains $D$ for the action of $G$ and $\Per$ is either the local or nonlocal perimeter.

    Then, there exist a lattice $G$, a fundamental domain $D$ for the action of $G$ and a partition of $(E_i)$ admissible so that 
    \[
       P_* =  \mu \Per(D)+ \sum_i \frac 12\Per(E_i) + \lambda \big| |E_i|-v_i\big| .
    \]
\end{theorem}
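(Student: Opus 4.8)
The plan is to run the direct method exactly as in the proofs of Theorems~\ref{thm:step 2} and~\ref{thm:step 3}, the only genuinely new point being the passage to the limit in the penalization term. First I would check that $P_*<\infty$: for an arbitrary lattice $G$ with $d(G)=L^d$, taking the parallelepiped $D_G$ of \eqref{eq:parallelepiped} as fundamental domain and the ``slab'' partition $(E_i)$ of $D_G$ constructed in the proof of Theorem~\ref{thm:step 2} (which realizes $|E_i|=v_i$ exactly, so the penalization vanishes) gives $P_*\le (\mu+N/2)\Per(D_G)<\infty$, as in \eqref{eq:C mu N}. Then I would fix a minimizing sequence of triples $(G^k,D^k,(E_i^k))$ and, up to a non-relabeled subsequence, record the uniform bounds $d(G^k)=|D^k|=L^d$ (Remark~\ref{eq:volume fund domain}), $\sup_k\Per(E_i^k)<\infty$ for each $i$ (each summand of the energy is nonnegative), $\sup_k|E_i^k|\le v_i+C/\lambda<\infty$ (from the boundedness of $\sum_i\lambda\big||E_i^k|-v_i\big|$), and $\sup_k\Per(D^k)<\infty$: the last bound is immediate if $\mu>0$, and otherwise follows from $\Per(D^k)\le\sum_i\Per(E_i^k)$, which holds since $E_i^k\subseteq D^k$ and the $E_i^k$ partition $D^k$ up to null sets --- in the local case $\partial^*D^k\subseteq\bigcup_i\partial^*E_i^k$ up to $\HH^{d-1}$-null sets with matching outer normals, and in the non-local case $\R^d\setminus D^k\subseteq\R^d\setminus E_i^k$ for each $i$.

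With these bounds I would invoke the concentration-compactness lemma --- \cite[Lemma 3.1]{CesaroniFragalaNovaga23} for the non-local perimeter, and \cite[Lemma 3.2]{CesaroniNovaga23_1} together with the arguments of \cite[Theorem 3.3]{CesaroniNovaga23_1} for the anisotropic local one --- to obtain, up to a further subsequence, a limit lattice $G$ with $d(G)=L^d$ (with $G^k\to G$ in the Kuratowski sense), elements $(g_l^k)_l\subseteq G^k$ with $\dist(K+g_l^k,K+g_m^k)\to\infty$ for every compact $K$ and every $l\neq m$, and Borel sets $(D_l)$ with $\sum_l|D_l|=L^d$, $D^k-g_l^k\to D_l$ in $L^1_{loc}$, and $D\coloneqq\bigcup_l D_l$ a fundamental domain for $G$. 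A diagonal argument together with the precompactness of sets of finite perimeter furnished by the above then yields, along a common subsequence, $E_i^k-g_l^k\to E_{i,l}$ in $L^1_{loc}$ for all $i,l$, and (passing to one more subsequence, using the bound $|E_i^k|\le v_i+C/\lambda$) $|E_i^k|\to a_i$ with $\sum_i a_i=L^d$, since $\sum_i|E_i^k|=|D^k|=L^d$ for every $k$. Setting $E_i\coloneqq\bigcup_l E_{i,l}\subseteq D$, the admissibility claims $|E_i\cap E_j|=0$ for $i\neq j$ and $|D\setminus\bigcup_i E_i|=0$ follow verbatim from claim~ii) in the proof of Theorem~\ref{thm:step 3}.

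The new step is the identification of $|E_i|$. The ``no mass created'' estimate $|E_i|\le\liminf_k|E_i^k|=a_i$ for each $i$ goes through as claim~i) in the proof of Theorem~\ref{thm:step 3}: for fixed $M,R$ the balls $B_R+g_l^k$ with $l\le M$ are eventually pairwise disjoint, so $\sum_{l\le M}|E_{i,l}\cap B_R|=\lim_k\sum_{l\le M}|E_i^k\cap(B_R+g_l^k)|\le\lim_k|E_i^k|=a_i$, and one lets $R,M\uparrow\infty$. Combined with $\sum_i|E_i|=|D|=L^d=\sum_i a_i$ this forces $|E_i|=a_i=\lim_k|E_i^k|$ for every $i$, and hence the penalization term passes to the limit along the chosen subsequence, $\sum_i\lambda\big||E_i|-v_i\big|=\lim_k\sum_i\lambda\big||E_i^k|-v_i\big|$. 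Finally I would apply Lemma~\ref{lem:semicontinuity} to $D^k$ and to each $E_i^k$ to get $\mu\Per(D)+\tfrac12\sum_i\Per(E_i)\le\liminf_k\big(\mu\Per(D^k)+\tfrac12\sum_i\Per(E_i^k)\big)$, add the two limits, and conclude
\[
\mu\Per(D)+\sum_i\Big(\tfrac12\Per(E_i)+\lambda\big||E_i|-v_i\big|\Big)\le P_*,
\]
so that $(G,D,(E_i))$ is the desired minimizer.

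The main obstacle, as anticipated, is that dropping the volume constraint removes the argument used in Theorems~\ref{thm:step 2}--\ref{thm:step 3} that the limit volumes equal the prescribed $v_i$; instead one must rule out volume loss in the concentration-compactness decomposition, i.e.\ prove $\sum_i|E_i|=L^d$, which hinges precisely on $\sum_l|D_l|=L^d$ together with the fact that the partition constraint $|D^k\setminus\bigcup_i E_i^k|=0$ (hence $\sum_i|E_i^k|=L^d$) survives unchanged. Once this is secured, the penalization term is continuous along the subsequence and the remainder is lower semicontinuity as before. A minor point requiring care is the boundedness of $\Per(D^k)$ in the case $\mu=0$, handled as indicated above.
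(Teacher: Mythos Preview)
Your proposal is correct and follows essentially the same route as the paper's proof: minimizing sequence, concentration--compactness for the lattices and domains, diagonal extraction of the $E_{i,l}$, the ``no mass created'' estimate $|E_i|\le\liminf_k|E_i^k|$ upgraded to equality via $\sum_i|E_i|=L^d=\sum_i|E_i^k|$, and finally Lemma~\ref{lem:semicontinuity}. You even supply two details the paper leaves implicit --- the finiteness of $P_*$ and, more notably, the bound $\sup_k\Per(D^k)<\infty$ in the case $\mu=0$, which indeed requires the observation $\Per(D^k)\le\sum_i\Per(E_i^k)$.
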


\begin{proof}
   For every $k \in \N$ we can consider lattices $(G^k)$,  fundamental domains $(D^k)$ for the action of $G^k$, and a partition $(E^k_i)$ so that
    \[
   P_* \ge \mu \Per(D^k) +\sum_i \frac{1}{2}\Per(E^k_i)+\lambda \big||E_i^k|-v_i\big|- \frac 1k,\qquad \forall k \in\N.
   \]
   In particular, we have the following uniform bounds
  \begin{align}
     &L^d = d(G^k)=|D^k|, & &\sup_{k\in\N}|E_i^k|<\infty, \quad\forall i,\label{eq:bounds Vol penalized} \\
     &\sup_{k\in\N}\Per(D^k) <\infty,&& \sup_{k\in\N}\Per( E^k_i) <\infty, \quad \forall i.\label{eq:bounds Per penalized}
 \end{align} 
    We are in the position to invoke the concentration compactness result in \cite[Lemma 3.1]{CesaroniFragalaNovaga23} (again, \cite[Lemma 3.2,Theorem 3.3]{CesaroniNovaga23_1} for the local anisotropic perimeter) to deduce, up to a non relabeled subsequence, the existence of a lattice $G$ with $d(G)=L^d$, of $(g_l^k)_l \subseteq G^k$ and of Borel sets $D_l\subseteq \R^d$ so that
    \begin{align*}
        \sum_l|D_l|= L^d,\qquad D^k-g_l^k \to D_l\quad\text{in }L^1_{loc}\text{ as $k\uparrow\infty$ for all $l$},
    \end{align*}
    and also that $|D_l\cap D_m|=0$ and $D\coloneqq \cup_lD_l$ is a fundamental domain for the limit lattice $G$. By a diagonalization argument and precompactness of sets of finite perimeters guaranteed  by \eqref{eq:bounds Vol penalized},\eqref{eq:bounds Per penalized}, we can pass to a common further subsequence in $k$ so that
    \[
    (E^k_i - g_l^k ) \to E_{i,l},\qquad\text{ in $L^1_{loc}$ as }k\uparrow\infty\text{ and for all }i,l,
    \]
    for some Borel sets $(E_{i,l})\subseteq \R^d$. Set now
    \[
     E_i \coloneqq \bigcup_l E_{i,l},\quad \forall i.
    \]
    Arguing now as previously done in the proof of Theorem \ref{thm:step 3}, we get that $(E_i)$ is admissible in the definition of $P_*$ and also that $|D\setminus \cup_i E_i|=0$ as well as $|E_i\cap E_j|=0$ for $i\neq j$. In particular, we also have $\sum_i |E_i|=L^d$.
    
    To conclude the proof we thus only need to prove that
    \begin{equation}
    P_* =  \mu \Per(D) +\sum_i \frac{1}{2}\Per(E_i)+\lambda \big||E_i|-v_i\big|
    \label{eq:claim conclusion}
    \end{equation}
    This however follows by the same arguments already employed in the proof of Theorem \ref{thm:step 3}, taking into account the following claim due to the relaxation of the volume constraint
    \begin{equation}
    \big||E_i|-v_i\big|  =\lim_{k\uparrow\infty}\big||E^k_i|-v_i\big|,\qquad \forall i.
    \label{eq:volume continuity}
    \end{equation}
    To see this, fix any $i = 1,...,N$ and notice that
    \begin{align*}
         |E_i| &\le \sum_l |E_{i,l}|  = \sum_l |E_{i,l}\cap D|  \le \liminf_{k\uparrow\infty}\sum_l|(E_i^k -g^k_l)\cap D)| \\
         &\le \liminf_{k\uparrow\infty}\sum_l|E_i^k \cap (D +g_l^k)|\le \liminf_{k\uparrow\infty}\sum_{g \in G}|E_i^k \cap (D +g)| = \liminf_{k\uparrow\infty}|E_i^k|,       
    \end{align*}
    since $D$ is fundamental. However, using the identity $\sum_i |E_i|=L^d = \sum_i |E^k_i|$ for every $k\in \N$, we get from the above that
    \[
       L^d = \sum_i |E_i| \le \sum_i\liminf_{k\uparrow\infty}|E_i^k|\le \liminf_{k\uparrow\infty}\sum_i |E_i^k| = L^d.
    \]
    This gives that all the liminf in the above are limits, and consequently that
    \[
        |E_i^k|\to |E_i|,\qquad\text{as $n\uparrow\infty$ for all i},
    \]
    must hold that implies \eqref{eq:volume continuity}.  Finally, the claim \eqref{eq:claim conclusion} and thus the conclusion of the proof follows now by the lower semicontinuity result of Lemma \ref{lem:semicontinuity} combined with \eqref{eq:volume continuity}.
\end{proof}
\section{Local minimality and regularity}
We here study local minimality and possibly the regularity properties for the periodic partitions of $\R^d$ given by the previous section.
\subsection{Local minimality for volume constrained problem}\label{sec:minimality}
\begin{definition}
    Let $\Per$ be either the local or the non-local perimeter and $\Lambda\ge 0$. We say that a partition $(E_i)$ of $\R^d$ is a \emph{volume constrained $\Lambda$-minimizer} of $\Per$ in an open set $\Omega \subseteq \R^d$, provided $\sum_i\Per(E_i,\Omega)<\infty$ and for any other partition $(F_i)$ of $\R^d$ with the property that $ E_i \triangle F_i \Subset \Omega$ and $|E_i \cap \Omega|=|F_i\cap \Omega|$ for any $i$, it holds
    \[
    \sum_i\Per(E_i,\Omega) \le\sum_i \Per(F_i,\Omega) + \Lambda |E_i\triangle F_i|.
    \]
    Moreover, for $r>0$ we say that $(E_i)$ is \emph{volume constrained $(\Lambda,r)$-minimal}, provided it is a volume constrained $\Lambda$-minimizer on every ball $B_r(x)$ for $x \in \R^d$.
\end{definition}
We have the following minimality property: 
\begin{lemma}\label{lem:minimality constrained}
    Let $L>0,N\in\N$  and a volume vector $ \vec v \coloneqq (v_1,...,v_N)$ so that $v_i>0, \sum_i v_i = L^d$. Consider  $G,D,(E_i)$ minimizers in Theorem \ref{thm:step 3}, for $\mu =0$. Then, the partition $(E_i + g)_{i,g \in G}$ of $\R^d$ is a volume constrained $(\Lambda,r)$-minimal of the local anisotropic perimeter for every $r< \rho_G/2 $ (recall $\rho_G$ is the packing radius of $G$) with $\Lambda=0$ in the case of the local perimeter and $\Lambda := \int_{\R^d\setminus B_{\rho_G-2r}(0)}K(x)\, \d x$ in the case of the non-local perimeter.
\end{lemma}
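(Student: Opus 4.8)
The plan is to exploit the $G$-invariance of the perimeter together with the packing radius to show that any compactly supported, volume-preserving competitor can be "periodized'' into a global competitor for $F_*$ (the minimum in Theorem \ref{thm:step 3}), and that this periodization changes the energy by at most the claimed error term. Fix $x \in \R^d$, $r < \rho_G/2$, and let $(F_i)_{i}$ be a partition of $\R^d$ with $E_i \triangle F_i \Subset B_r(x)$ and $|E_i \cap B_r(x)| = |F_i \cap B_r(x)|$ for every $i$. Since $r < \rho_G/2$, the balls $B_r(x) + g$ for $g \in G$ are pairwise disjoint; moreover each $B_r(x)$ meets at most one translate $E_i + g$ of each cell in a way that matters, so replacing $(E_i+g)_{i,g}$ by the partition obtained from $(F_i)$ inside $B_r(x)$ and propagating $G$-periodically yields a new $G$-periodic partition $(\tilde E_i)$ of $\R^d$ whose restriction to the fundamental domain $D$ differs from $(E_i)$ only in a region compactly contained in $B_r(x)$ (after possibly translating $B_r(x)$ by an element of $G$ so that it sits well inside a chosen fundamental domain; if $B_r(x)$ straddles the boundary of $D$ one instead picks a translated fundamental domain, which is harmless since the infimum in $F_*$ is over all fundamental domains). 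Crucially, by the volume-matching hypothesis $|\tilde E_i| = |E_i| = v_i$, so $(\tilde E_i)$ is admissible in Theorem \ref{thm:step 3}, and hence $\tfrac12 \sum_i \Per(\tilde E_i) \ge \tfrac12 \sum_i \Per(E_i) = F_*$.

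The second step is to convert this global inequality into the local $(\Lambda, r)$-minimality inequality. Here I would split according to the two perimeters. In the local anisotropic case, $\Per_\varphi$ is additive: $\Per_\varphi(\tilde E_i, \R^d) = \Per_\varphi(\tilde E_i, B_r(x)) + \Per_\varphi(\tilde E_i, \R^d \setminus \overline{B_r(x)})$, and since $\tilde E_i = E_i$ outside $B_r(x)$ the "outside'' terms cancel in the difference; combined with the reduction above this gives $\sum_i \Per_\varphi(E_i, B_r(x)) \le \sum_i \Per_\varphi(F_i, B_r(x))$, i.e. $\Lambda = 0$. In the non-local case one uses the almost-subadditivity property (item (i) of the list in Section \ref{sec:perimeters}) and, more importantly, the fact that the periodized competitor differs from $(E_i)$ on all of $\bigcup_{g} (B_r(x)+g)$: the "cross terms'' $\int_{(\tilde E_i - E_i) \text{ region}} \int_{\R^d \setminus \tilde E_i} K$ involving points in one translate of $B_r(x)$ and the complement living in the rest of $\R^d$ (in particular in other translates, at distance $\ge \rho_G - 2r$) are controlled by $|E_i \triangle F_i| \int_{\R^d \setminus B_{\rho_G - 2r}(0)} K(y)\,\d y$; summing over the (finitely many relevant) translates and over $i$ produces exactly the stated $\Lambda = \int_{\R^d \setminus B_{\rho_G - 2r}(0)} K(x)\,\d x$, while the "local'' part of $\Per_K$ inside a single copy of $B_r(x)$ is the same for the periodized partition and for $(F_i)$ by $G$-invariance.

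The main obstacle — the step that needs to be written carefully — is the bookkeeping in the non-local case: isolating the contribution to $\sum_i \Per_K(\tilde E_i)$ coming from the periodic array of perturbed balls, showing that the "within one ball'' contribution reproduces $\sum_i \Per_K(F_i, B_r(x))$ after translating back by $g$ (this uses $K(x)=K(-x)$ and $G$-invariance of $\Per_K$), and bounding the remaining "ball-to-outside'' interactions. The key geometric input is that two distinct translates $B_r(x)+g_1$, $B_r(x)+g_2$ with $g_1 \ne g_2$ are separated by distance at least $2\rho_G - 2r \ge \rho_G - 2r > 0$, and a point in the symmetric difference inside one copy interacts with the complement in another copy through a kernel integrated over $\R^d \setminus B_{\rho_G-2r}(0)$; since $\min(1,|x|)K \in L^1$ this is finite and gives the advertised $\Lambda$. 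A secondary, more routine point is the fundamental-domain issue: one must note that because $F_*$ is defined with an infimum over all fundamental domains $D$ of $G$ (Theorem \ref{thm:step 3}), one may always translate so that $B_r(x)$ lies in the interior of an admissible fundamental domain, so the competitor $(\tilde E_i)$ genuinely arises from a pair $(\tilde D, (\tilde E_i))$ with $\tilde E_i \subseteq \tilde D$; this is where $r < \rho_G/2$ rather than $r < \rho_G$ is used, since $B_r(x)+g$ disjointness for all $g\neq \mathrm{id}$ is exactly the packing condition at scale $r$.
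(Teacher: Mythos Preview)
Your overall strategy matches the paper's: periodize the local competitor into a global competitor for $F_*$, invoke minimality, then localize via the almost-subadditivity of $\Per$ over the disjoint family of balls $\{B_r(x)-g\}_{g\in G}$.

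There is, however, one step you have not handled correctly and which the paper treats differently. You write that the periodized competitor ``restricted to the fundamental domain $D$ differs from $(E_i)$ only in a region compactly contained in $B_r(x)$'', and that one may ``translate $B_r(x)$ by an element of $G$ so that it sits well inside a chosen fundamental domain'' or else ``pick a translated fundamental domain''. This is not justified: the minimizing $D$ may be highly disconnected (nothing at this stage of the paper gives it any regularity), so no translate of $D$ need contain a ball of radius $r$, however small. The paper circumvents this entirely: given the competitor $(F_{i,g})$ (note the index set is $\{1,\dots,N\}\times G$, not just $\{1,\dots,N\}$), it defines
\[
F_i \coloneqq \Big(E_i\setminus\bigcup_{g}(B_r(x)-g)\Big)\cup\bigcup_{g}\big(F_{i,g}-g\big),
\]
and observes that $\tilde D\coloneqq\cup_i F_i$ is itself a fundamental domain for $G$. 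The point is that the infimum in $F_*$ is over \emph{all} fundamental domains, so one is free to compare against this newly constructed $\tilde D$, not merely against translates of $D$. The pulled-back $F_i$ then genuinely differs from $E_i$ on the whole disjoint union $\bigcup_g(B_r(x)-g)$, and the localization step must sum over all these balls; $G$-invariance then collapses the sum back to a single copy $\sum_{i,g}\Per(F_{i,g},B_r(x))$, which is the desired right-hand side. Your non-local error analysis is then essentially what the paper does via the function $\phi$.

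A minor secondary point: the condition $r<\rho_G/2$ is not used to guarantee the disjointness $B_r(x)+g_1\cap B_r(x)+g_2=\emptyset$ (that already holds for $r<\rho_G$), but rather to ensure $\rho_G-2r>0$ so that the error $\phi(\rho_G-2r)$ is finite.
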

\begin{proof}
     Recall that $D =\cup_i E_i$ is a fundamental domain for the action of $G$. Let $(g_l)$ for $l \in\N$ be an enumeration of $G$.  Let us denote for brevity $E_{i,l} \coloneqq E_i + g_l$ for every $l \in \N,i$ and $B\coloneqq B_r(x)$, for $x \in \R^d,r<\rho_G/2$.

    Let us consider $F_{i,l}$ another partition of $\R^d$ with the property that $E_{i,l}\triangle F_{i,l}\Subset B$ and $|E_{i,l}\cap B|=|F_{i,l}\cap B|$ for all $i,l$. Set
    \[
    F_i \coloneqq  \left( E_i \setminus \bigcup_{ l} (B-g_l)\right)\cup \left(\bigcup_{l} (F_{i,l} -g_l)\right).
    \]
    We claim that $\tilde D \coloneqq \cup_i F_i$ is a fundamental domain for the action of $G$. This simply follows by the fact that $D$ is fundamental and that $(D +g_l) \triangle (\cup_iF_{i,l}) \Subset B$ by assumptions (recall that $\#\{i\} = L^d$). Next, we claim that $|F_i|=1$ for every $i$. This easily follows combining 
    \[
    1=|E_i|,\qquad 0=|E_i\cap E_j|=|E_{i,l}\cap E_{j,l}|
    \]
    with
    \[
    |E_i \cap (B-g_l)|=|E_{i,l}\cap B|=|F_{i,l}\cap B| = |(F_{i,l} -g_l)\cap (B-g_l)|,
    \]
    thanks to the assumptions on $F_{i,l}$.

Let now $\phi:(0,+\infty)\to (0,+\infty)$ be defined as $\phi\equiv 0$ in the case of the local perimeter, and
\[
\phi(t) := \int_{\R^d\setminus B_{t}(0)}K(x)\, \d x
\]
in the case of the non-local perimeter.
    
    The fundamental domain $\tilde D$ and its partition $F_i$ are admissible for the definition of $f_0(\cdot)= f(\cdot)$. Thus, by minimality, we can estimate
    \begin{align*}
         0 & \le f(\tilde D)-f(D) = \sum_i\Per(F_i) -\sum_i\Per(E_i) \\
         &= \sum_i\Per\big(F_i,\cup_l (B-g_l)\big) -\sum_i\Per\big(E_i, \cup_l (B-g_l)\big) +  \sum_{i,l} \phi (\rho_G-2r)|(E_i+g_l)\triangle F_{i,l} | \\ 
         &\le \sum_{i,l}\Per(F_{i,l},B)-  \sum_{i,l}\Per((E_i + g_l), B) + \sum_{i,l} \phi (\rho_G-2r)|(E_i+g_l)\triangle F_{i,l} | ,
    \end{align*}
    having used the subadditivity of the perimeter on disjoint sets and its translation invariance, as well as $r<\rho_G/2$ and that $\phi$ is monotone non-decreasing (possibly zero, for the local perimeter). This concludes the proof.
\end{proof}
As anticipated in the Introduction, we cannot prove regularity for volume constrained $(\Lambda,r)$-minimal partitions in the above sense,
since we cannot prove that the partition is locally finite.
However, in the planar case, we shall see that the situation simplifies and regularity can be investigated. 
More generally, we can study regularity for the penalized problem removing the volume constraint as we are going to see next.
\subsection{Local minimality and regularity for the penalized problem}
We here study local minimality and regularity properties for the periodic partitions of $\R^d$ given by the penalized problem in Theorem \ref{thm:penalized}. We start with a definition of local minimality.
\begin{definition}
    Let $\Per$ be either the local or the non-local perimeter and $\Lambda\ge 0$. We say that a partition $(E_i)$ of $\R^d$ is a \emph{$\Lambda$-minimizer} of $\Per$ in an open set $\Omega \subseteq \R^d$, provided $\sum_i\Per(E_i,\Omega)<\infty$ and for any other partition $(F_i)$ of $\R^d$ with the property that $ E_i \triangle F_i \Subset \Omega$ for any $i$, it holds
    \[
    \sum_i\Per(E_i,\Omega) \le\sum_i \Per(F_i,\Omega) + \Lambda |E_i\triangle F_i|.
    \]
    Moreover, for $r>0$ we say that $(E_i)$ is \emph{$(\Lambda,r)$-minimal}, provided it is a volume constrained $\Lambda$-minimizer on every ball $B_r(x)$ for $x \in \R^d$.
\end{definition}
We have the following minimality property. 
\begin{lemma}\label{lem:minimality penalized}
   Let $L>0,N\in\N$  and a volume vector $ \vec v \coloneqq (v_1,...,v_N)$ so that $v_i>0, \sum_i v_i = L^d$. Consider $G,D,(E_i)$ minimizers in Theorem \ref{thm:penalized} for $\lambda>0,\mu =0$. Then, the partition $(E_i + g)_{i,g \in G}$ of $\R^d$ is $(\Lambda,r)$-minimal for every $r< \rho_G/2 $ (recall $\rho_G$ is the packing radius of $G$) where $\Lambda:=\lambda $ in the case of the local perimeter and 
   $\Lambda := \lambda + \int_{\R^d\setminus B_{\rho_G-2r}(0)}K(x)\, \d x$ in the case of the non-local perimeter.
\end{lemma}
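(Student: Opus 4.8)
The plan is to mimic the proof of Lemma \ref{lem:minimality constrained} almost verbatim, the only difference being that we no longer need to preserve volumes of the local competitors, which is precisely why the resulting minimality is of the stronger (unconstrained) $(\Lambda,r)$-type, at the cost of an extra $\lambda$ in the constant $\Lambda$. Fix $x\in\R^d$ and $r<\rho_G/2$, write $B\coloneqq B_r(x)$, enumerate $G=\{g_l\}_{l\in\N}$, and set $E_{i,l}\coloneqq E_i+g_l$. Given a competitor partition $(F_{i,l})_{i,l}$ of $\R^d$ with $E_{i,l}\triangle F_{i,l}\Subset B$ for all $i,l$ (note: no volume matching is imposed now), define, exactly as before,
\[
F_i \coloneqq \Big( E_i \setminus \bigcup_{l}(B-g_l)\Big)\cup\Big(\bigcup_{l}(F_{i,l}-g_l)\Big).
\]
The same elementary argument shows $\tilde D\coloneqq\bigcup_i F_i$ is a fundamental domain for $G$: since $D=\bigcup_i E_i$ is fundamental and $(D+g_l)\triangle\big(\bigcup_i F_{i,l}\big)\Subset B$, and since the balls $B-g_l$ are pairwise disjoint by $r<\rho_G/2$, the modification is a disjoint, volume-preserving (in total) rearrangement over one period, so $|\tilde D|=|D|=L^d$ and the $G$-translates of $\tilde D$ still tile $\R^d$ up to null sets. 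Likewise $F_i\subseteq\tilde D$, $|F_i\cap F_j|=0$, and $|\tilde D\setminus\bigcup_i F_i|=0$, so $(F_i)$ is an admissible partition of $\tilde D$ for the functional in Theorem \ref{thm:penalized} with the same lattice $G$.

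Now invoke minimality of $(G,D,(E_i))$ for $P_*$ with $\mu=0$: since $G$ and $\tilde D$ are admissible,
\[
0\le \sum_i\Big(\tfrac12\Per(F_i)+\lambda\big||F_i|-v_i\big|\Big)-\sum_i\Big(\tfrac12\Per(E_i)+\lambda\big||E_i|-v_i\big|\Big).
\]
For the volume terms, $\big||F_i|-v_i\big|-\big||E_i|-v_i\big|\le \big||F_i|-|E_i|\big|$, and since $F_i$ differs from $E_i$ only inside $\bigcup_l(B-g_l)$ we have $\big||F_i|-|E_i|\big|\le \sum_l|(E_i+g_l)\triangle F_{i,l}|$; summing over $i$ this contributes $\lambda\sum_{i,l}|(E_i+g_l)\triangle F_{i,l}|$ to the right-hand side. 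For the perimeter terms, argue exactly as in Lemma \ref{lem:minimality constrained}: using almost-subadditivity of $\Per$ on the pairwise disjoint sets $B-g_l$, $G$-invariance, monotonicity of $\phi$ together with $\dist(B-g_l,B-g_m)\ge \rho_G-2r$ for $l\ne m$, and that the perturbations are contained in $\bigcup_l(B-g_l)$, one gets
\[
\tfrac12\sum_i\Per(F_i)-\tfrac12\sum_i\Per(E_i)\le \tfrac12\sum_{i,l}\Per(F_{i,l},B)-\tfrac12\sum_{i,l}\Per(E_{i,l},B)+\tfrac12\,\phi(\rho_G-2r)\sum_{i,l}|E_{i,l}\triangle F_{i,l}|,
\]
with $\phi\equiv 0$ in the local case and $\phi(t)=\int_{\R^d\setminus B_t(0)}K(x)\,\d x$ in the non-local case. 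Combining the two estimates and absorbing both error terms yields
\[
\sum_{i,l}\Per(E_{i,l},B)\le \sum_{i,l}\Per(F_{i,l},B)+\Lambda\sum_{i,l}|E_{i,l}\triangle F_{i,l}|,
\]
with $\Lambda=\lambda$ in the local case and $\Lambda=\lambda+\int_{\R^d\setminus B_{\rho_G-2r}(0)}K(x)\,\d x$ in the non-local case (reindexing the doubly-indexed family $(E_{i,l})$ as a single partition of $\R^d$). Since $x$ and $r<\rho_G/2$ were arbitrary, this is exactly $(\Lambda,r)$-minimality.

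I do not expect a serious obstacle here; the proof is a routine adaptation of Lemma \ref{lem:minimality constrained}. The only points requiring a little care are (i) checking that $\tilde D$ is again a genuine fundamental domain — this is where disjointness of the balls $B-g_l$ (hence $r<\rho_G/2$) is used — and (ii) bookkeeping the two separate sources of the constant $\Lambda$: the penalization parameter $\lambda$ now appears because competitors need not be volume-matched, while the $\phi(\rho_G-2r)$ term is the same non-local "leakage" contribution as in the constrained case and vanishes for the local perimeter. Note that, unlike in Lemma \ref{lem:minimality constrained}, the case $\lambda>0$ is genuinely needed: dropping the volume constraint forces the extra $\lambda$ into $\Lambda$, but in return we obtain the stronger unconstrained minimality notion, which is what makes the regularity theory in the subsequent propositions applicable.
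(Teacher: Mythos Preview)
Your argument is correct and follows the paper's proof essentially line by line: the same competitor $F_i$, the same verification that $\tilde D$ is a fundamental domain, the triangle inequality $\big||F_i|-v_i\big|-\big||E_i|-v_i\big|\le |E_i\triangle F_i|$ for the penalization term, and the same use of almost-subadditivity with the $\phi(\rho_G-2r)$ correction. The only cosmetic difference is that you spell out a bit more carefully why $\tilde D$ remains fundamental; otherwise the two proofs coincide, including the minor bookkeeping with the factor $\tfrac12$ on the perimeter versus the stated constant $\Lambda$.
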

\begin{proof}
    Recall that $D =\cup_i E_i$ is a fundamental domain for the action of $G$. Let $(g_l)$ for $l \in\N$ be an enumeration of $G$.  Let us denote for brevity $E_{i,l} \coloneqq E_i + g_l$ for every $l \in \N,i$ and $B\coloneqq B_r(x)$, for $x \in \R^d,r<\rho_G/2$.

    Let us consider $F_{i,l}$ another partition of $\R^d$ with the property that $E_{i,l}\triangle F_{i,l}\Subset B$ for all $i,l$. Set
    \[
    F_i \coloneqq  \left( E_i \setminus \bigcup_{ l} (B-g_l)\right)\cup \left(\bigcup_{l} (F_{i,l} -g_l)\right).
    \]
    We claim that $\tilde D \coloneqq \cup_i F_i$ is a fundamental domain for the action of $G$. This simply follows by the fact that $D$ is fundamental and that $(D +g_l) \triangle (\cup_iF_{i,l}) \Subset B$ by assumptions.

    The fundamental domain $\tilde D$ and the partition $F_i$ are admissible and hence by minimality, we can estimate
    \begin{align*}
         0 & \le \frac 12 \sum_i\Per(F_i) + \lambda \big||E_i|-v_i\big| -\frac 12\sum_i\Per(E_i) -\lambda \big||F_i|-v_i\big| \\
         &\le\sum_i\Per\big(F_i,\cup_l (B-g_l)\big) -\sum_i\Per\big(E_i, \cup_l (B-g_l)\big) +\lambda \sum_i \big| E_i\triangle F_i\big|\\
         &\qquad +\sum_{i,l} \phi (\rho_G-2r)\big|((E_i+g_l)\triangle F_{i,l}) \cap B \big| \\
         &\le  \sum_{i,l}\Per(F_{i,l},B)-  \sum_{i,l}\Per((E_i + g_l), B) +\left(\lambda +\phi (\rho_G-2r)\right) \sum_{i,l} \big|( (E_i +g_l)\triangle F_{i,l})\cap B \big| \\
    \end{align*}
    having used the subadditivity of the perimeter on disjoint sets, its translation invariance, the estimate $\big||E|-c\big| -\big||F|-c\big| \le \big| E\triangle F\big|$ by triangular inequality for $c \in \R$ and every $E,F\subseteq \R^d$ and  $r<\rho_G/2$ and that $\phi$ is monotone non decreasing (possibly zero, for the local perimeter). This concludes the proof.    
\end{proof}
Thanks to the above, we can study the $C^{1,\alpha}$-regularity of boundaries in the local anisotropic case.
\begin{proposition}\label{prop:regularty penalized 1}
     Let $\Per_\varphi$ be the anisotropic perimeter, $L>0,N\in\N$  and let $ \vec v \coloneqq (v_1,...,v_N)$ be a volume vector so that $v_i>0, \sum_i v_i = L^d$. Consider $G,D,(E_i)$ minimizers in Theorem \ref{thm:penalized} for $\lambda>0,\mu =0$. Then, for all $i=1,...,N$, we have that $\partial E_i$ is $C^{1,\alpha}$ regular up to a singular set $\Sigma$ that is $\HH^{d-1}$-negligible.
\end{proposition}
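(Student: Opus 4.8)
The plan is to translate the $(\Lambda,r)$-minimality established in Lemma~\ref{lem:minimality penalized} into the hypotheses of the classical regularity theory for almost-minimizing clusters with anisotropic perimeter, and then invoke that theory. First I would fix $r_0 \coloneqq \rho_G/4$, say, so that by Lemma~\ref{lem:minimality penalized} the $G$-periodic partition $\{E_i + g : i, g\in G\}$ is $(\Lambda, r_0)$-minimal with $\Lambda = \lambda$ (recall $\mu = 0$ and we are in the local anisotropic case, so the $\phi$-term is absent). The key observation is that, unlike the volume-constrained setting of Lemma~\ref{lem:minimality constrained}, here competitors are \emph{not} required to preserve volumes: this is exactly the definition of a $\Lambda$-minimal (equivalently, $(\Lambda,\omega)$-almost minimal with $\omega(s) = \Lambda s$) partition in the sense of the regularity theory for clusters, see \cite[Part~IV]{Maggi12_Book} (in particular the treatment of minimizing clusters, Theorems~28.1 and~30.1 there) for the Euclidean case and the anisotropic extensions (e.g.\ Bombieri's or the work in the anisotropic cluster literature). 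The notion of $\Lambda$-minimality is scale-stable on balls of radius $\le r_0$, which is all the regularity theory needs, since it is local.

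The key steps, in order, are: \textbf{(1)} record that the partition has locally finite perimeter — this follows since $\sum_i \Per_\varphi(E_i, B_{r_0}(x)) < \infty$ for every $x$, which in turn is immediate from $(\Lambda, r_0)$-minimality by comparing with the trivial competitor that fills $B$ with a single chamber; \textbf{(2)} interpret the partition as a (generally infinite, but locally finite-energy) cluster and observe that $\Lambda$-minimality implies each \emph{pairwise interface} $\partial^* E_i \cap \partial^* E_j$ is an anisotropic $(\Lambda, r_0)$-almost minimal boundary in the usual sense (one freezes all chambers except $E_i, E_j$ and varies only across their common boundary, so the partition constraint reduces to the two-phase problem locally); \textbf{(3)} apply the anisotropic almost-minimal regularity theorem: each such interface is $C^{1,\alpha}$ away from a closed singular set, and the full topological boundary $\partial E_i$ decomposes, up to an $\HH^{d-1}$-negligible set, into these $C^{1,\alpha}$ interfaces plus the lower-dimensional set where three or more chambers meet; \textbf{(4)} collect the singular contributions into $\Sigma$ and note $\HH^{d-1}(\Sigma) = 0$ — the anisotropic singular set of each interface has Hausdorff dimension $\le d-1$ and is in fact $\HH^{d-1}$-null by the standard density/monotonicity argument (for $\varphi$ merely a norm one gets $\HH^{d-1}$-negligibility; under ellipticity/smoothness of $\varphi$ one would get dimension $\le d-8$, but the statement only claims $\HH^{d-1}$-negligibility), and the higher-order junction set is $(d-2)$-dimensional hence also $\HH^{d-1}$-null. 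Finally, $G$-periodicity lets one restrict attention to $D$ and conclude for $E_1, \dots, E_N$.

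The main obstacle — and the only genuinely nontrivial point — is step~(2)–(3): verifying that the \emph{infinite} periodic cluster (infinitely many chambers $E_i + g$) is covered by the cluster regularity theory, which is usually stated for \emph{finite} clusters. This is handled by locality: inside any ball $B_{r_0}(x)$ only finitely many chambers $E_i + g$ have positive measure in $B_{r_0}(x)$ up to $\HH^{d-1}$-null overlap — indeed by $(\Lambda,r_0)$-minimality one gets a uniform density lower bound for each nonempty chamber, bounding the number of chambers meeting $B_{r_0}(x)$ by a constant depending only on $d$, $r_0$, $\varphi$ and $\Lambda$ — so locally the problem is exactly a finite-cluster almost-minimization problem and the classical results apply verbatim on $B_{r_0/2}(x)$; covering $\R^d$ (or $D$) by countably many such balls finishes it. The density bound itself is the routine comparison-with-a-ball argument from the regularity theory, so no new ideas are needed; one simply has to state it carefully to justify passing from the infinite partition to a locally finite one.
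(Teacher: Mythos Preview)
Your overall architecture is right --- pass through Lemma~\ref{lem:minimality penalized} to get $(\Lambda,r)$-minimality without volume constraint, then feed this into regularity theory --- and this is exactly what the paper does. But your step~(2) contains a genuine gap. Freezing all chambers except $E_i,E_j$ and ``varying only across their common boundary'' gives you that $E_i$ is a $\Lambda$-minimizer \emph{among competitors contained in $E_i\cup E_j$}; it does \emph{not} give that $\partial^*E_i\cap\partial^*E_j$ is an almost-minimal boundary in a full open ball, because other chambers $E_k$ may still occupy positive measure in every neighbourhood of a point $x\in\partial^*E_i\cap\partial^*E_j$ (their density at $x$ tends to zero, but need not vanish on any ball). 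For Bombieri's single-set regularity you need free competitors in an open set, and for that you must first show that in some ball $B_{s/2}(x)$ only $E_i$ and $E_j$ are present. Your density lower bound in the last paragraph yields only local \emph{finiteness} (finitely many chambers meet $B_{r_0}(x)$), which is strictly weaker than this two-phase reduction.

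The paper closes this gap by invoking the Elimination (infiltration) Lemma \cite[Lemma~4.4]{CesaroniNovaga22}: for $\HH^{d-1}$-a.e.\ $x\in\partial^*E_i$ one has $x\in\partial^*(E_j+g)$ for some $j,g$, and then the Elimination Lemma (which applies precisely because the partition is $(\Lambda,r)$-minimal in the unconstrained sense) gives a radius $s>0$ such that $|\cup_{k\neq i,j,\,l}(E_k+g_l)\cap B_{s/2}(x)|=0$. At that point $E_i$ is a genuine $\Lambda$-minimizer of $\Per_\varphi$ in $B_{s/2}(x)$, and Bombieri \cite{Bombieri82} applies directly. Your alternative route --- local finiteness plus an anisotropic \emph{cluster} regularity theorem --- would also work in principle, but such a theorem is not in \cite{Maggi12_Book} (which is isotropic) and would anyway use an elimination step internally; so you have not actually bypassed the missing ingredient.
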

\begin{proof}
    First, notice that $\HH^{d-1}(\partial E\setminus \partial^* E) =0$ for every set of finite perimeter $E\subseteq \R^d$, where $\partial^* E$ is the \emph{reduced boundary}. See \cite{AmbrosioFuscoPallarabook} for this claim and the related notions. Therefore, we can equivalently prove regularity of $\partial^*E_i$. 
    
    Let us now  consider the $G$-periodic partition $ (E_i + g_l)_{i,l}$ of $\R^d$, where $g_l$ is an enumaration of $G$. The conclusion will follow by proving that there exists some structural radius $r>0$ independent on $g \in G$ so that for any $i=1,...,N$ and for $\HH^{d-1}$ every $x \in \partial^* E_i$, we have that $B_r(x) \cap \partial^*E_i $ is $C^{1,\alpha}$ outside of a $\HH^{d-1}$-negligible set.  Indeed, consider (without loss of generality and reordering indexes) $x \in \partial^* E_1 \cap \partial^*(E_2 + g)$ for a suitable $g \in G$. Then, by Lemma \ref{lem:minimality penalized}, we know that the partition $ (E_i + g_l)_{i,l}$ is $(0,s)$-minimal for all $s<\rho_G/2$. Thus, the Elimination Lemma \cite[Lemma 4.4]{CesaroniNovaga22} applies at balls centred at $x$, giving the existence of $\sigma_0,r_0>0$ (depending only on $N,L,n$) so that  for every $s<\min\{ r_0,\rho_G/2\}$ it holds
    \[
    |\cup_{i>2,l} (E_i +g_l) \cap B_s(x)|\le \sigma_0r^d \quad \implies \quad |\cup_{i>2,l} (E_i +g_l) \cap B_{s/2}(x)| =0.
    \]
    Therefore, since $x \in \partial^* E_1 \cap \partial^*(E_2 + g)$, then by definition of reduced boundary and anisotropic perimeter and choosing $r=s/2$, we see that Lemma \ref{lem:minimality penalized} yields in this case
    \[
    \Per_\varphi (E_1,B_r(x))\le \Per_\varphi (F,B_r(x)),
    \]
    for all $F\subseteq \R^d$ so that $F\triangle E_1 \Subset B_r(x)$. Thus, the classical regularity theory \cite{Bombieri82} applies and by arbitrariness of $x$ the proof is then concluded.
\end{proof}

We conclude this part by stating, without proof,  local finiteness and higher order regularity results in the case of classical and fractional perimeter

The proof of the following can be obtained by the same arguments already presented in \cite[Theorem 4.9]{CesaroniNovaga22} (extending \cite{Choe89}) and it is based on the monotonicity formula by \cite{MassariTamanini91} and the local finiteness result for conical partitions (cf. \cite[Proposition 4.5]{CesaroniNovaga22}). This reasoning makes it possible to deduce local finiteness of the minimal partition and therefore reduce it to classical regularity theory \cite{Maggi12_Book}.
\begin{proposition}\label{prop:regularty penalized 2}
     Let $\Per$ be the classical perimeter, $L,N\in\N$  and let $ \vec v \coloneqq (v_1,...,v_N)$ be a volume vector so that $v_i>0, \sum_i v_i = L^d$. Consider $G,D,(E_i)$ minimizers of Theorem \ref{thm:penalized} for $\lambda>0,\mu =0$. Then, the partition $(E_i+g)_{i,g\in G}$ is locally finite and hence $D$ is bounded. Moreover, for all $i=1,...,N$, we have that $\partial E_i$ a $C^\infty$-hypersurface in $\R^d$ up to a $\HH^{d-1}$-negligible closed singular set $\Sigma \subseteq \R^d$. Finally, if $d=2$, then $\Sigma$ is discrete. 
\end{proposition}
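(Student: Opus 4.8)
The plan is to follow the strategy of \cite[Theorem~4.9]{CesaroniNovaga22} (extending \cite{Choe89}): first promote the local minimality of Lemma~\ref{lem:minimality penalized} to \emph{local finiteness} of the periodic partition $\mathcal E:=(E_i+g)_{i,\,g\in G}$, and then invoke the regularity theory for perimeter-minimizing clusters \cite{Maggi12_Book}. Since $\Per$ is here the classical perimeter, Lemma~\ref{lem:minimality penalized} tells us that $\mathcal E$ is $(\Lambda,r)$-minimal for every $r<\rho_G/2$, with $\Lambda=\lambda$. From this almost-minimality I would first extract the standard uniform density estimates: there are $c\in(0,1)$ and $\rho_0\in(0,\rho_G/2)$, depending only on $d,\lambda,\rho_G$, such that for every cell $C\in\mathcal E$, every $x\in\partial^*C$ and every $\rho<\rho_0$,
\[
c\,\rho^d\le |C\cap B_\rho(x)|\le(1-c)\,\rho^d,\qquad c\,\rho^{d-1}\le\HH^{d-1}(\partial^*C\cap B_\rho(x)),
\]
the competitors being obtained by transferring $C\cap B_\rho(x)$ to the union of the remaining cells and using the relative isoperimetric inequality.

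The key step — and the main obstacle — is local finiteness, since a priori the fundamental domain $D$ produced in Theorem~\ref{thm:penalized} is merely Borel and could be unbounded. As there are only the $N$ cells $E_1,\dots,E_N$ in $D$ and $G$ is discrete, it is enough to show that each $E_i$, hence $D=\bigcup_{i=1}^N E_i$, is essentially bounded: then only the finitely many $g\in G$ with $\dist(0,g)\le\mathrm{diam}(K)+\mathrm{diam}(D)$ contribute a cell of $\mathcal E$ meeting a given compact $K$. Were $E_i$ essentially unbounded, its reduced boundary $\partial^*E_i$ would be unbounded (otherwise, having empty reduced boundary and finite volume in the connected set $\R^d\setminus\overline{B_R}$ for $d\ge2$, the set $E_i$ would be essentially contained in $\overline{B_R}$ by the structure theorem); picking $x_k\in\partial^*E_i$ with $|x_k-x_l|\ge3\rho_0$ for $k\ne l$, the lower volume density estimate gives $|E_i|\ge\sum_k|E_i\cap B_{\rho_0}(x_k)|\ge\sum_k c\rho_0^d=\infty$, a contradiction. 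Hence $\mathcal E$ is locally finite and $D$ is bounded. Alternatively, as in \cite[Theorem~4.9]{CesaroniNovaga22}, local finiteness can be obtained from the Massari--Tamanini monotonicity formula \cite{MassariTamanini91} — which forces every blow-up of $\mathcal E$ to be a minimizing conical partition — together with the finiteness of minimizing conical partitions \cite[Proposition~4.5]{CesaroniNovaga22}.

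Once $\mathcal E$ is locally finite, on every ball $B_r(x)$ with $r<\rho_G/2$ it is a $\Lambda$-minimizing cluster with finitely many cells, so the (local) regularity theory for almost-minimizing clusters \cite{Maggi12_Book} applies: away from a relatively closed set $\Sigma$ with $\HH^{d-1}(\Sigma)=0$, the reduced boundary $\bigcup_i\partial^*E_i$ is a locally finite union of $C^{1,\alpha}$ interfaces $\Sigma_{ij}=\partial^*E_i\cap\partial^*E_j$, and $\partial E_i$ coincides with $\overline{\partial^*E_i}$ up to an $\HH^{d-1}$-null set, so one may argue with $\partial^*E_i$. For $C^\infty$-regularity I would use the Euler--Lagrange equation: since $\sum_i\lambda\big||E_i|-v_i\big|$ depends only on the \emph{total} volumes $|E_i|$, its first variation along a normal perturbation of $\Sigma_{ij}$ supported away from $\Sigma$ is $c_{ij}\int_{\Sigma_{ij}}\phi\,\d\HH^{d-1}$ for a single constant $c_{ij}\in[-2\lambda,2\lambda]$ (coming from the subdifferentials of $t\mapsto|t|$ at $|E_i|-v_i$ and $|E_j|-v_j$), so each interface has constant mean curvature and elliptic bootstrapping upgrades $C^{1,\alpha}$ to $C^\infty$. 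Finally, $\Sigma$ consists of the points where three or more cells meet together with the lower-dimensional singularities of area-minimizing cones; it is closed and $\HH^{d-1}$-negligible, and when $d=2$ the only singular blow-up is the $120^\circ$ triple junction, so by local finiteness $\Sigma$ reduces to a discrete set of triple points.
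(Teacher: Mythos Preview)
Your argument is correct. The paper does not actually supply a proof here; it only indicates that local finiteness follows as in \cite[Theorem~4.9]{CesaroniNovaga22} via the Massari--Tamanini monotonicity formula \cite{MassariTamanini91} together with the finiteness of minimizing conical partitions \cite[Proposition~4.5]{CesaroniNovaga22}, after which one invokes the classical cluster regularity theory of \cite{Maggi12_Book}. You reproduce this route as your ``alternative'', so in that sense you match the paper exactly. Your primary route, however, is genuinely simpler: rather than passing through blow-ups and tangent cones, you use the lower volume density estimate for each cell (a consequence of the elimination lemma already used by the paper, \cite[Lemma~4.4]{CesaroniNovaga22}) together with the trivial bound $|E_i|\le L^d<\infty$ to conclude directly that each $E_i$, and hence $D$, is essentially bounded---which is all one needs for local finiteness of the periodic partition. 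This shortcut avoids the monotonicity formula entirely; the price is that it does not produce the extra structural information (tangent cones, stratification) that the monotonicity approach yields as a by-product, though that information is not needed for the statement at hand. Your treatment of the remaining regularity (reduction to finite $\Lambda$-minimizing clusters, $C^{1,\alpha}$ from \cite{Maggi12_Book}, constant mean curvature of each interface from the first variation of the penalized functional, $C^\infty$ by elliptic bootstrapping, and discreteness of $\Sigma$ in $d=2$) is exactly what the paper has in mind.
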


For the fractional perimeter, the proof of the following can be obtained by the same arguments already presented in \cite[Theorem 4.10]{CesaroniNovaga22} based on the monotonicity formula \cite{ColomboMaggi2017} and again the local finiteness result for conical partitions (cf. \cite[Proposition 4.5]{CesaroniNovaga22}).  This reasoning makes it possible to deduce local finiteness of the minimal partition and therefore reduce to apply the regularity theory in \cite{Maggi12_Book}. Higher order regularity then follows by a bootstrap argument as performed in \cite[Theorem 4.10]{CesaroniNovaga22}
 \begin{proposition}\label{prop:regularity penalized 3}
    Let $\Per$ be the fractional perimeter with interaction kernel
    \[
        K(x) \coloneqq \frac{1}{|x|^{d-s}},\qquad x \in \R^d,s\in(0,1).
    \]
    Let $L>0,N\in\N$  and a volume vector $ \vec v \coloneqq (v_1,...,v_N)$ so that $v_i>0, \sum_i v_i = L^d$. Consider $G,D,(E_i)$ minimizers in Theorem \ref{thm:penalized} for $\lambda>0,\mu =0$.  Then, the partition $(E_i+g)_{i,g\in G}$ is locally finite and hence $D$ is bounded. Moreover, for all $i=1,...,N$, we have that $\partial E_i$ a $C^\infty$-hypersurface in $\R^d$ up to a $\HH^{d-1}$-negligible closed singular set $\Sigma \subseteq \R^d$. Finally, if $d=2$, then $\Sigma$ is discrete.
 \end{proposition}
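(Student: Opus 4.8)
The plan is to deduce Proposition~\ref{prop:regularity penalized 3} from the already-established minimality property in Lemma~\ref{lem:minimality penalized} by following verbatim the scheme of \cite[Theorem~4.10]{CesaroniNovaga22}, the only novelty being that the local minimality here is $G$-periodic and holds only on balls of radius $r<\rho_G/2$. Concretely, by Lemma~\ref{lem:minimality penalized} the partition $(E_i+g)_{i,g\in G}$ is $(\Lambda,r)$-minimal for every $r<\rho_G/2$, with $\Lambda=\lambda+\int_{\R^d\setminus B_{\rho_G-2r}(0)}K(x)\,\d x$; since $K(x)=|x|^{-d-s}$ (after renaming $s$, the kernel above is $|x|^{-(d+s)}$ with the stated singularity, so assumption c) of Section~\ref{sec:perimeters} holds), this $\Lambda$ is finite for every fixed $r<\rho_G/2$, and one fixes once and for all $r_0:=\rho_G/4$, say, to get a uniform $\Lambda_0$. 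The point is that all the local arguments of \cite{CesaroniNovaga22} — the density estimates, the elimination lemma, the clean-up lemma — only require minimality on balls of a fixed small radius, so they transfer with the structural constants now additionally depending on $\rho_G$, hence on $L$ and $G$, but this is harmless for a qualitative regularity statement.

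First I would record the uniform density estimates for each phase $E_i+g$ and for the "void'' $\R^d\setminus\bigcup_{i,g}(E_i+g)$ (which is actually empty, but the estimates are stated for general partitions), exactly as in \cite[Section~4]{CesaroniNovaga22}; these follow from $(\Lambda_0,r_0)$-minimality by comparing with the competitor obtained by removing a small ball from one phase and adding it to a neighbour. Next I would invoke the local finiteness result for \emph{conical} (blow-up) partitions, \cite[Proposition~4.5]{CesaroniNovaga22}, whose proof uses the fractional monotonicity formula of \cite{ColomboMaggi2017}: at $\HH^{d-1}$-a.e.\ point of $\bigcup_i\partial^*E_i$ the rescaled partitions converge to a cone which, being a local minimizer of the fractional perimeter that is a cone, must be a half-space or a finite union of sectors, and in particular only finitely many phases survive in the blow-up. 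Combined with the density lower bounds, a covering/compactness argument (again as in \cite[Theorem~4.10]{CesaroniNovaga22}) upgrades this to genuine local finiteness of $(E_i+g)_{i,g\in G}$ in $\R^d$; periodicity then forces $D=\bigcup_i E_i$ to meet only finitely many translates of itself with positive-measure common boundary, so $D$ is bounded.

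Once local finiteness is in hand, the partition is, locally, a cluster in the sense of \cite{Maggi12_Book} with $\Lambda_0$-minimality for the fractional perimeter, and one is in the classical regularity theory for almost-minimizers of the fractional perimeter: away from the set where two phases meet, each $\partial^*E_i$ is a $C^{1,\alpha}$ hypersurface, and a bootstrap (the nonlocal Euler--Lagrange equation has smooth right-hand side once $C^{1,\alpha}$ is known, cf.\ the bootstrap in \cite[Theorem~4.10]{CesaroniNovaga22}) promotes this to $C^\infty$. The singular set $\Sigma$ — points where three or more phases meet, or where the blow-up cone is not a half-space — is relatively closed and $\HH^{d-1}$-negligible by the conical local finiteness plus the standard dimension-reduction (Federer-type) argument; in dimension $d=2$ a blow-up cone is a finite collection of rays from the origin, so $\Sigma$ has no accumulation points, i.e.\ it is discrete.

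The main obstacle is purely expository rather than mathematical: one must check that every tool borrowed from \cite{CesaroniNovaga22} — which there is proved for globally (i.e.\ on all of $\R^d$) $\Lambda$-minimal partitions arising from a single minimization — only ever uses minimality on balls of radius below some threshold, so that the restriction $r<\rho_G/2$ coming from Lemma~\ref{lem:minimality penalized} is not a genuine limitation; and one must track that the elimination lemma and the monotonicity formula are applied at scales $s<\min\{r_0,\rho_G/2\}$, with the resulting structural constants allowed to depend on $L$ and $\rho_G$. This dependence on $L$ is exactly the point flagged in the Introduction as obstructing the passage $L\uparrow\infty$, but it does not affect the present qualitative statement. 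Since the referenced proof in \cite{CesaroniNovaga22} already handles precisely this kernel $K(x)=|x|^{-(d-s)}$ via \cite{ColomboMaggi2017}, no new estimate is needed, and the proof can legitimately be stated as "the same arguments as in \cite[Theorem~4.10]{CesaroniNovaga22} apply, now localized to balls of radius $r<\rho_G/2$ by Lemma~\ref{lem:minimality penalized}.''
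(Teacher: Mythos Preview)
Your proposal is correct and follows essentially the same approach as the paper, which does not give a self-contained proof but simply declares that the argument of \cite[Theorem~4.10]{CesaroniNovaga22} applies, invoking the monotonicity formula of \cite{ColomboMaggi2017}, the local finiteness of conical partitions \cite[Proposition~4.5]{CesaroniNovaga22}, and then a bootstrap for higher regularity. Your write-up is in fact more detailed than the paper's own treatment, and your explicit remark that all the borrowed tools only use minimality on balls of a fixed small radius (so the restriction $r<\rho_G/2$ from Lemma~\ref{lem:minimality penalized} is harmless) is a useful clarification that the paper leaves implicit.
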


\section{The planar case}
We now specialize our investigation on the planar case. We first address some basic regularity results and then prove a stability result of the Honeycomb tessellation.
\subsection{Local finiteness and regularity}
We start by proving the existence of a locally finite minimal $G$-periodic planar partition in Theorem \ref{thm:step 3}.
\begin{proposition}\label{prop:planar local finiteness}
    Let $\Per_\varphi$ be the local anisotropic perimeter in $\R^2$. Let $L>0,N\in\N$  and let $\vec v \coloneqq (v_1,...,v_N)$ 
    be a volume vector so that $v_i>0, \sum_i v_i = L^2$. Then, there exists a lattice $G$, a fundamental domain $\tilde D$ for the action of $G$ and a partition $(\tilde E_i)$ minimizing Theorem \ref{thm:step 3} for $\mu\ge 0$ so that 
    \[
     \tilde D \cap B_{r_G}(0) \neq \varnothing,\qquad \text{and}\qquad  {\rm diam}(\tilde D)\le 2r_G +C,
    \]
    for some constant $C>0$ depending on $\mu,L,N$. Finally, the $G$-periodic partition $(\tilde E_i+g)_{i,g\in G}$ of $\R^2$ is locally finite.
\end{proposition}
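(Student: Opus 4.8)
The strategy is to first normalize the minimizer produced by Theorem~\ref{thm:step 3} so that the fundamental domain is not wandering off to infinity, and then to exploit a density-lower-bound / elimination argument in the plane to get local finiteness. First I would take any minimizer $G,D,(E_i)$ from Theorem~\ref{thm:step 3}. Since the problem \eqref{main:constrained} (equivalently $F_*$) is invariant under simultaneous translation of $D$ and the $E_i$ by a fixed vector of $\R^2$ (not necessarily in $G$), and under $G$-translation of individual pieces, I can translate so that $\tilde D$ meets the ball $B_{r_G}(0)$: indeed $\bigcup_{g\in G}(\tilde D+g)=\R^2$ up to null sets and $B_{r_G}(0)$ already covers a fundamental cell's worth by definition of the covering radius, so some translate of $D$ has positive-measure intersection with $B_{r_G}(0)$; pick that translate and call it $\tilde D$, with $\tilde E_i$ the corresponding translated pieces. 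The energy is unchanged, so $\tilde D,(\tilde E_i)$ is still a minimizer.

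\medskip

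The core step is the diameter bound ${\rm diam}(\tilde D)\le 2r_G+C$. The idea: because $\tilde D\cap B_{r_G}(0)\ne\varnothing$ and $\tilde D$ is connected modulo the tiling structure only up to $G$-translates, I instead argue via the $G$-periodic partition $(\tilde E_i+g)_{i,g}$ of $\R^2$, which by Lemma~\ref{lem:minimality constrained} is a volume-constrained $(\Lambda,r)$-minimal partition for $r<\rho_G/2$ (and $\Lambda=0$ in the local case, $\mu$ contributing nothing since the perimeter of $D$ is just part of the network). The key planar fact is a uniform density estimate: there are constants $c_0,r_0>0$, depending only on $N,L$ (and $\varphi$), such that for every cell $E=\tilde E_i+g$ and every $x\in\partial^*E$ one has $|E\cap B_s(x)|\ge c_0 s^2$ and $|B_s(x)\setminus E|\ge c_0 s^2$ for $s<r_0$. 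In the plane this forces: any cell $\tilde E_i$ has diameter controlled, because a long thin cell would have too much perimeter relative to its volume $v_i$, contradicting the uniform perimeter bound $\sum_i\Per_\varphi(\tilde E_i)\le(\mu+\tfrac N2)\Per_\varphi(D_G)=:C_1$ from \eqref{eq:C mu N}. Concretely: ${\rm diam}(\tilde E_i)\le C_2\Per_\varphi(\tilde E_i)$ in $\R^2$ (a connected-component / relative-isoperimetric type inequality — a bounded set of finite perimeter in the plane has diameter at most a constant times its perimeter when it is indecomposable, and one reduces to the indecomposable case since $|\tilde E_i|=v_i>0$ forces a component of definite size by the density bound). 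Then since $\tilde D=\bigcup_i\tilde E_i$ up to null sets, and each $\tilde E_i$ meets the "cluster" within bounded distance of $B_{r_G}(0)$ — here I use that the $\tilde E_i$ together with their $G$-translates tile, so the pieces $\tilde E_i$ that make up $\tilde D$ cannot be arbitrarily far apart without violating that a fundamental domain has exactly volume $L^2$ concentrated efficiently — one gets ${\rm diam}(\tilde D)\le 2r_G+\sum_i{\rm diam}(\tilde E_i)\le 2r_G+C_2 C_1=:2r_G+C$. (The appearance of $2r_G$ is because once one piece of $\tilde D$ is pinned near $0$, any other piece is within $r_G$ of a $G$-translate of it, hence within $2r_G$ of the origin up to the internal diameters.)

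\medskip

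Finally, local finiteness of $(\tilde E_i+g)_{i,g\in G}$ follows immediately: there are only $N$ cells per fundamental domain, each of diameter at most $C$, so any ball $B_R(x)$ meets $E_i+g$ only for those finitely many $g\in G$ with $|g|\le R+C+{\rm diam}(\tilde D)$, and $G$ being a lattice this is a finite set; thus $B_R(x)$ intersects only finitely many cells of the periodic partition. The $\mu\ge0$ term is harmless throughout since adding $\mu\,\Per_\varphi(D)=\mu\,\Per_\varphi(\partial\tilde D)$ only strengthens the uniform perimeter bound \eqref{eq:C mu N}, which is all that is used.

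\medskip

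\textbf{Main obstacle.} The delicate point is the diameter estimate for the individual cells $\tilde E_i$ — turning the a priori bound $\sum_i\Per_\varphi(\tilde E_i)\le C_1$ into a genuine diameter bound. This requires (i) knowing that each $\tilde E_i$ has a "large" indecomposable component, for which one invokes the uniform density lower bound coming from $(\Lambda,r)$-minimality together with the volume constraint $|\tilde E_i|=v_i$; and (ii) a planar relative-isoperimetric inequality bounding diameter by perimeter for indecomposable sets. One must also be careful that $r_0$ and the density constants depend only on $N,L,\varphi$ and not on the (a priori unknown) lattice $G$ — this is where one uses that the $(\Lambda,r)$-minimality holds for all $r<\rho_G/2$ with the constants in the elimination lemma (\cite[Lemma 4.4]{CesaroniNovaga22}) depending only on $N,L,d$. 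Everything else — the initial translation, the harmless role of $\mu$, and deducing local finiteness from the diameter bound — is routine.
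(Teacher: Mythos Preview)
Your proposal has a genuine structural gap in the diameter step. After a \emph{single} translation of the minimizer so that $\tilde D$ meets $B_{r_G}(0)$, there is no reason the remaining indecomposable pieces of $\tilde D$ (or of the $\tilde E_i$) lie anywhere near the origin: a fundamental domain of finite perimeter can a priori have countably many components scattered throughout $\R^2$, and the heuristic ``pieces cannot be arbitrarily far apart without violating $|\tilde D|=L^2$'' is simply false. Likewise, the chain $\mathrm{diam}(\tilde D)\le 2r_G+\sum_i\mathrm{diam}(\tilde E_i)$ is unjustified: even if each $\tilde E_i$ had bounded diameter, nothing prevents the $N$ sets from sitting far from each other; and each $\tilde E_i$ itself need not be indecomposable, so the diameter--perimeter inequality does not apply to it directly. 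Your appeal to density/elimination estimates to force a ``large component'' is also problematic here, since Lemma~\ref{lem:minimality constrained} only yields \emph{volume-constrained} $(\Lambda,r)$-minimality, whereas the elimination lemma you cite (\cite[Lemma~4.4]{CesaroniNovaga22}) is stated for the unconstrained penalized setting; this distinction is precisely why the paper treats regularity for the penalized and constrained problems so differently.

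The paper's argument avoids all of this by working at the level of $D$ rather than of the cells: decompose $D$ into its indecomposable components $D_l$ (via \cite{AmbrosioCasellesMasnouMorel01}), translate \emph{each} $D_l$ by a suitable lattice element $g_l\in G$ so that $D_l-g_l$ meets $B_{r_G}(0)$, and set $\tilde D=\bigcup_l(D_l-g_l)$, $\tilde E_i=\bigcup_l((E_i\cap D_l)-g_l)$. Subadditivity and $G$-invariance of $\mathrm{Per}_\varphi$ show the energy does not increase, so $\tilde D,(\tilde E_i)$ is still a minimizer. Now the planar diameter--perimeter bound $\mathrm{diam}(D_l)\le c\,\mathrm{Per}_\varphi(D_l)$ for indecomposable sets applies term by term, and since every $D_l-g_l$ touches $B_{r_G}(0)$ one gets $\mathrm{diam}(\tilde D)\le 2r_G+c\sum_l\mathrm{Per}_\varphi(D_l)=2r_G+c\,\mathrm{Per}_\varphi(D)\le 2r_G+C$. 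Local finiteness then follows exactly as you say. The key missing idea in your sketch is this per-component lattice translation; once you have it, no density or elimination argument is needed at all.
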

\begin{proof}
    Let us consider a minimizing lattice $G$, a fundamental domain $ D$ for the action of $G$ and a partition $(E_i)$ given by Theorem \ref{thm:step 3}. Since $D$ is so that $\Per_\varphi(D)<\infty$, we can  consider (\cite{AmbrosioCasellesMasnouMorel01}) possibly countable indecomposable components $D_l\subset D$, i.e. so that
    \[
    D =  \cup_l D_l, \qquad |D_l\cap D_,|=0, \quad \forall l\neq m,\qquad \Per_\varphi(D)=\sum_l\Per_\varphi (D_l).
    \]
    Up to redefining each $D_l$ possibly discarding negligible sets, we might suppose that
    \[
        D_l\cap D_m =\emptyset \qquad D_l\cap ( D_m + g )=\emptyset , \qquad \forall l\neq m,\, g \in G.
    \]    
    Thus, by the diameter-perimeter estimate for indecomposable planar sets of finite perimeter \cite[Lemma 2.13]{DayrensMasnouNovagaPozzetta22} we know for some $c>0$ that
    \[
    {\rm diam}(D_l)\le c \Per_\varphi(D_l),\qquad \forall l.
    \]
    The above properties have been established for the classical perimeter in which case $c = 1/2$. Clearly, these carry over for the local anisotropic perimeter up to a constant $c>0$.

    Let us now consider $g_l \in G$ so that $(D_l-g_l)\cap B_{r_G}(0) \neq \varnothing$ (here $r_G>0$ is the covering radius of the lattice $G$) and define the sets
    \[
       \tilde E_i \coloneqq \cup_l \big((E_i \cap D_l)-g_l\big),\qquad \tilde D_l \coloneqq D_l -g_l, \quad \forall i,l.
    \]
    By construction, $\tilde D \coloneqq \cup_l \tilde D_l$ is a fundamental domain for the action of $G$ and $(\tilde E_i)$ is a partition up to negligible sets of $\tilde D$. We now claim that $G,\tilde D, (\tilde E_i)$ are so  that
    \[
        \mu\Per_\varphi(\tilde D) + \frac 12\sum_i\Per_\varphi(\tilde E_i) =   \mu\Per_\varphi( D) + \frac 12\sum_i\Per_\varphi( E_i),
    \]
    This would give that $G,\tilde D, (\tilde E_i)$ are also minimizers in Theorem \ref{thm:step 3}. Notice that the $\ge$ inequality in the above is trivial, as we have just shown that these are competitors for the minimization problem in Theorem \ref{thm:step 3}. For the converse, we estimate
    \[
        \Per_\varphi(D) = \sum_l \Per_\varphi(D_l) = \sum_l \Per_\varphi(D_l-g_l) \ge \Per_\varphi(\cup_l(D_l-g_l)) = \Per_\varphi(\tilde D).
    \]
    and for all $i$ that
    \begin{align*}
        \Per_\varphi(E_i)&= \Per_\varphi(E_i\cap D) =\sum_l\Per_\varphi(E_i \cap D_l) =\sum_l\Per_\varphi((E_i \cap D_l) -g_l) \ge \Per_\varphi(\tilde E_i),
    \end{align*}
    having used, in both, that $(D_l)$ is an indecomposable decompisition of $D$ and the $\sigma$-subadditivity and $G$-invariance properties of the anisotropic perimeter. Notice that, by construction, we have for all $i$
    \[
    {\rm diam}(\tilde E_i)\le {\rm diam}(\tilde D) \le 2r_G + \sum_l {\rm diam}(D_l) \le  2r_G + c \Per_\varphi(D).
    \]
    The first claim then follows by choosing $C\coloneqq C(\mu,L,N)>0$ so that $c\Per_\varphi(D)\le C$, which can be obtained considering the lattice competitor $L\cdot \Z^2$ and arguing as for \eqref{eq:C mu N}.
    
    To conclude, we need to show that the partition of $\R^2$ given by $(\tilde E_i + g)_{i,g\in G}$ is locally finite. To this aim, we will prove that for each $K\subseteq \R^2$ compact, it holds
    \[
     \# J<\infty, \quad \text{for}\quad J \coloneqq \{ g \in G \colon (\tilde D+g)\cap K \neq \emptyset\}. 
    \]
    Indeed, since $\tilde D$ is pre-compact, as it is bounded, then ${\rm cl}(\tilde D)\cup K \cap \big( {\rm cl}(\tilde D)\cup K + g\big) \neq \emptyset$ occurs only for finite many $g \in G$, being $G$ properly discontinuous. It is evident that any $g \in J$ is so that the latter property is true, concluding the proof.
\end{proof}
Thanks to the local finiteness of the minimal partition in the planar case, we are able to reduce the regularity problem to classical results in the theory of clusters \cite{Almgren76} (see also \cite{Morgan94,MorganBOOK,Maggi12_Book}). We shall need here the version stated in \cite{NovagaPaoliniTortorelli23} for locally minimizing clusters relative to open sets.
\begin{theorem}\label{thm:regularity constrained planar}
   Let $\Per$ be the classical perimeter in $\R^2$. Let $L>0,N\in\N$  and let $ \vec v \coloneqq (v_1,...,v_N)$ be a volume vector so that $v_i>0, \sum_i v_i = L^2$. Consider $G,D,(E_i)$ minimizers in Theorem \ref{thm:step 3} with $\mu= 0$. Then, for every $x \in\R^2$ and $r<\rho_G/2$, it holds
    \begin{itemize}
        \item[{\rm i)}] $\cup_{i,g} \partial^* (E_i +g)  \cap B_r(x)$ is made of a finite number of straight lines or circular arcs meeting at triples with angles $120^\circ$;
        \item[{\rm ii)}] the three signed curvature of arcs in $\cup_{i,g} \partial^* (E_i +g)  \cap B_r(x)$ meeting in a vertex have zero sum;
        \item[{\rm iii)}] is it possible to define a finite number of pressures $\rho_{i,g}$ so that the curvature separating $(E_i+g)$ and $(E_j +g')$ is precisely $\rho_{i,g}-\rho_{j,g'}$ if $i\neq j$ or zero if $i=j$ (with the convention that the pressure $\rho_{i,g}$ is positive when the arc has a concavity towards $E_i+g$).
    \end{itemize}
\end{theorem}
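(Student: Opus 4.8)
The plan is to localize and then invoke the classical regularity theory for planar minimizing clusters, the two inputs being the local finiteness of Proposition~\ref{prop:planar local finiteness} and the local minimality of Lemma~\ref{lem:minimality constrained}. First I would replace the given minimizer by the locally finite one supplied by Proposition~\ref{prop:planar local finiteness}: the indecomposable rearrangement carried out there does not change the total perimeter, so it is again a minimizer of Theorem~\ref{thm:step 3} with $\mu=0$, and now $D$ is bounded and the $G$-periodic partition $\mathcal{E}:=(E_i+g)_{i,\,g\in G}$ of $\R^2$ is locally finite. Fix $x\in\R^2$ and $r<\rho_G/2$, and set $\Omega:=B_r(x)$. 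By local finiteness only finitely many chambers $C_1,\dots,C_m$ of $\mathcal{E}$ meet $\overline{\Omega}$ (each $C_a$ being some $E_{i(a)}+g(a)$), and $\{C_a\cap\Omega\}_{a=1}^m$ is, up to a negligible set, a partition of $\Omega$; thus inside $\Omega$ we are dealing with an honest finite cluster.

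Next I would apply Lemma~\ref{lem:minimality constrained} to the classical perimeter, which is the local case with $\Lambda=0$, on the ball $\Omega=B_r(x)$ with $r<\rho_G/2$: this gives that $(C_a)_{a=1}^m$ is a genuine volume-constrained local perimeter minimizer in $\Omega$, namely $\sum_a\Per(C_a,\Omega)\le\sum_a\Per(F_a,\Omega)$ for every competitor cluster $(F_a)$ with $F_a\triangle C_a\Subset\Omega$ and $|F_a\cap\Omega|=|C_a\cap\Omega|$ for all $a$. Hence $(C_a)$ is a locally minimizing cluster relative to the open set $\Omega$ in the sense of \cite{NovagaPaoliniTortorelli23}, and the planar structure theorem there (resting on \cite{Almgren76,Morgan94,MorganBOOK,Maggi12_Book}) applies: $\bigcup_a\partial^*C_a\cap\Omega$ is a finite union of embedded arcs of class $C^{1,\gamma}$ which pairwise do not cross and whose only singular points are triple junctions where exactly three arcs meet at three equal angles of $120^\circ$. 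Since every such arc is stationary for the perimeter under the volume constraints, it has constant scalar curvature, hence is a circular arc or a straight segment; this proves (i).

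It remains to produce the pressures and the curvature relations. The first variation of the constrained problem in $\Omega$ yields Lagrange multipliers, and along each interface $\partial^*C_a\cap\partial^*C_b\cap\Omega$ the signed curvature is constant and equals the difference of the multipliers of the two adjacent chambers, with the sign convention that positive curvature means concavity towards the first chamber. Because the only volume constraints of the underlying problem are $|E_i|=v_i$ for $i=1,\dots,N$, and because the minimal partition is $G$-periodic with connected interface network $\bigcup_{i,g}\partial^*(E_i+g)$, these local multipliers are forced to reduce to $N$ numbers: the pressure of a chamber $E_i+g$ is a value $\rho_i$ independent of $g$. Thus the curvature between $E_i+g$ and $E_j+g'$ is $\rho_i-\rho_j$, which is zero when $i=j$ (so that the corresponding interface is a segment), proving (iii); and at a triple junction, the three adjacent curvatures, being the three cyclic differences of the three pressures involved, sum to zero, proving (ii).

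\textbf{Main difficulty.} The substantive step is not the local regularity, which is a direct quotation of the planar cluster theory, but rather (a)~securing local finiteness so that this theory is applicable at all---this is exactly what Proposition~\ref{prop:planar local finiteness} was designed for---and (b)~the bookkeeping needed to see that the locally defined pressures are $G$-equivariant and therefore take only finitely many values. Once (a) and (b) are in place the argument is routine.
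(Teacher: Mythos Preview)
Your approach is essentially the paper's: reduce to the locally finite partition via Proposition~\ref{prop:planar local finiteness}, apply Lemma~\ref{lem:minimality constrained} on balls of radius $r<\rho_G/2$, and then quote the planar cluster regularity of \cite{NovagaPaoliniTortorelli23}. The one substantive step you are missing is the return to the \emph{given} minimizer. The theorem is stated for an arbitrary minimizing triple $G,D,(E_i)$, not only for the bounded one $\tilde D,(\tilde E_i)$ produced by Proposition~\ref{prop:planar local finiteness}; your ``replace'' move proves the conclusion for $\tilde D$ but not for $D$. The paper closes this gap with an extra argument: once $\tilde D$ and each $\tilde E_i$ are known to be regular (hence with finitely many pieces), one reads off that the indecomposable decomposition of the original $D$ used in Proposition~\ref{prop:planar local finiteness} has only finitely many terms, so $D$ itself is bounded and the original partition $(E_i+g)_{i,g}$ is locally finite; the same localization then applies verbatim to $D,(E_i)$.

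A smaller point: your derivation of (iii) asserts that the interface network is connected in order to force the locally defined multipliers to be $g$-independent. This connectedness is not established anywhere, and the paper does not use it; it simply reads off (i)--(iii) from \cite[Theorem~4.1]{NovagaPaoliniTortorelli23} applied to the finite cluster in $B_r(x)$. If you want to argue $g$-independence yourself, the right reason is the one you also mention---that the global problem has only the $N$ constraints $|E_i|=v_i$, so exchanging volume between $E_i+g$ and $E_i+g'$ is an admissible variation---but this should be phrased via the original minimization, not via connectedness.
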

\begin{proof}
    Let us denote by $G,\tilde D,(\tilde E_i)$ the minimizers constructed in Proposition \ref{prop:planar local finiteness} from $G,D,(E_i)$. We will prove first that the partition $(\tilde E_i +g)_{i,g}$ of $\R^2$ satisfies all the listed regularity properties, and then argue that also the original partition $(E_i+g)_{i,g}$ shares the same properties.

    Let $(g_m)$ be an enumeration of $G$. Recall from Lemma \ref{lem:minimality constrained} that the partition $(\tilde E_i +g_m)_{i,m}$ of $\R^2$ is $(0,r)$-minimal for every $r<\rho_G/2$, that is for every $x \in \R^2$ and $r < \rho_G/2$, we have
    \[
    \sum_{i,m} \Per(\tilde E_i+g_m,B_r(x))\le \sum_i\Per(F_{i,m},B_r(x)),
    \]
    for all partitions $(F_{i,m})$ of $\R^2$ with $(\tilde E_i +g_m)\triangle  F_{i,m} \Subset B_r(x)$ and $|(\tilde E_i+g_m)\cap \tilde B_r(x)|=|F_{i,m}\cap B_r(x)| $. 

    However, by Proposition \ref{prop:planar local finiteness}, we also know that 
    \[
    \tilde D\text{ is bounded,}\quad \text{and}\quad \#  \{ m \colon (\tilde E_i + g_m) \cap B_r(x)\}<\infty,\qquad \forall i.
    \]
    Thus, there exist $J_i\subseteq \N$ finite (possilbly empty for some $i=1,...,N$, depending on $r,x$) so that 
    \[
    (\tilde E_i + g_m)_{i \in \{1,...,N\}, l \in J_i}\qquad \text{is a partition up to negligible sets of }B_r(x),
    \]
    and thus it is locally minimal on $B_r(x)$.

    We are in a position to invoke the classical regularity theory for minimal clusters, referring in the localized version to open sets in the planar case to \cite[Theorem 4.1]{NovagaPaoliniTortorelli23}. We thus deduce that all the listed regularity properties for the partition $\{\tilde E_i + g_m\}_{i,m}$  on the open set $B_r(x)$. 

    To conclude, we notice that, since $\tilde D$ as well $\tilde E_i$ are regular, the indecomposable components $D_l$ of $D$ must be of finite number (recall from the proof of Proposition \ref{prop:planar local finiteness} that $\tilde D,\tilde E_i$ are achieved by possibly countable $G$-translations $D_l$ along suitable elements of $G$). In particular, $D,(E_i)$ must be also bounded and accordingly the partition $(E_i+g_m)_{i,m}$ of $\R^2$ is locally finite. Therefore, all the previous arguments apply as well to the original partition yielding the conclusion of the proof.
\end{proof}
\subsection{Stability of Honeycomb for almost equal areas}
In this part, we prove a stability result of the standard Honeycomb tessellation of the plane. In our formalism, we consider $N$ equal regular hexagons of unit volume attached as in Figure \ref{fig:Honeycomb} and their union will consist in a fundamental domain $D_{\sf HC}$ for the $N$-Honeycomb lattice $G_{\sf HC}$ with volume $d(G_{\sf HC})=N=|D_{\sf HC}|$. 
\begin{figure}[!ht]
    \centering
    \includegraphics[scale =.7]{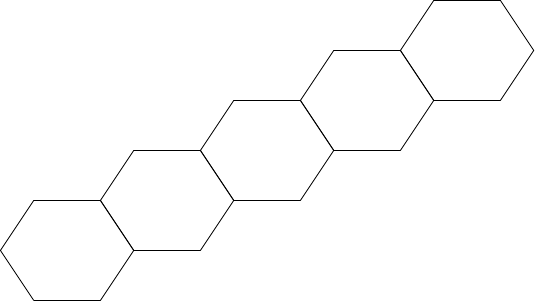}
    \caption{Fundamental domain $D_{\sf HC}$ for the action of the lattice $G_{\sf HC}$ associated to $N$ regular hexagons with unit volumes}
    \label{fig:Honeycomb}
\end{figure}
In a nutshell, we prove that when our minimal partitions have $N$ chambers with \emph{almost} all the same amount of area, then the minimal energy in Theorem \ref{thm:step 3} is the same of that of $N$ regular hexagons and a minimal partition is given by a slight modification of this reference configuration. 

Consider $N\in\N$ and a volume vector $\vec v\coloneqq (v_1,...,v_N)$ so that 
\[
 \sum_{i=1}^Nv_i = N,\qquad v_i \in \left( \frac 12, \frac 32 \right),\quad\text{for all }i=1,..,N,
\]
in particular, we will consider $L\coloneqq \sqrt N$. 

A competitor in Theorem \ref{thm:step 3} is given by the following choice:
\begin{equation}
 G_{\sf HC}, \qquad D(\vec v)\coloneqq \cup_i H(v_i) ,\qquad E_i\coloneqq H(v_i).
\label{eq:competitor H}
\end{equation}
where $H(v_i)\subseteq \R^2$ are defined iteratively as follows. First, $H(v_1)$ is the hexagon with $|H(v_1)|=v_1$ obtained from the standard one with unit area by stretching only the horizontal edges by a factor $x_1$ so to match the area, see Figure \ref{fig:pic3}.
\begin{figure}[!ht]
    \centering
    \includegraphics[scale =1.1]{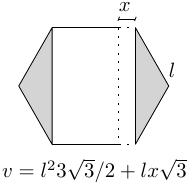}
    \caption{The area $v$ of an hexagon obtained from the regular hexagon with edge $l$ stretching the horizontal edge by a factor $|x|<l$.}
    \label{fig:pic3}
\end{figure}
Then, in general, $H(v_i)$ is the hexagon with $|H(v_i)|=v_i$ obtained stretching accordingly the horizontal edges by a factor $x_i$ and with a common oblique edge with $H(v_{i-1})$, for all $i=2,...,N$. See Figure \ref{fig: D with H v} for the construction and notice that it is well defined as we require $v_i \in \big(1/2,3/2\big)$, hence $|x_i|>l$ never happens as a choice in Figure \ref{fig:pic3} since the overall amount of grey area is equal to $1/2$. When $v=1$, we simply write $H(1) \coloneqq H$, that is, the regular hexagon with unit area.
Notice that the set $D(\vec v)\coloneqq \cup_i H(v_i)$ is a fundamental domain for the reference lattice $G_{\sf HC}$.

\begin{figure}[!ht]
    \centering
    \includegraphics[scale =0.8]{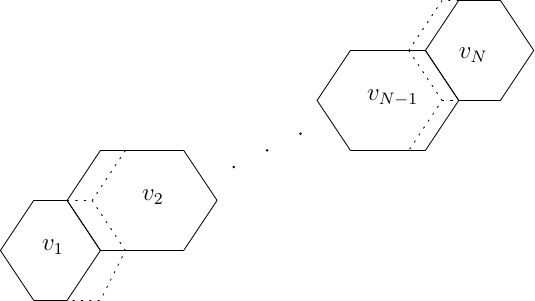}
    \caption{Partition of $D(\vec v)$ with unequal prescribed areas.}
    \label{fig: D with H v}
\end{figure}

It is immediate that, with this choice, \eqref{eq:competitor H} are well defined and competitors. Furthermore, since we require $\sum_i v_i = N$, then $\sum_i x_i =0$ and therefore it holds
\[
\frac 12 \sum_{i=1}^N \Per(H(v_i)) =  \frac 12 \sum_{i=1}^N \big( \Per(H) +2x_i\big) = \frac N2 \Per(H),
\]
where $\Per$ is the classical perimeter in $\R^2$. We are now ready to prove our main stability result.

\begin{theorem}\label{thm:stability HC}
    For every $N\in\N$, there exists  $\delta\coloneqq \delta(N) \in (0,1/2)$ so that the following holds. For every volume vector $\vec v\coloneqq (v_1,...,v_N)$ so that
    \begin{equation}
    \max_{i=1,..,N} |v_i - 1|<\delta,\qquad \sum_{i=1}^Nv_i =N,
    \label{eq:quasi unit volumes}
    \end{equation}
    then 
    \[
        \frac N2 \Per(H) = \inf_G\inf_D \inf \left\{ \sum_i \frac 12 \Per(E_i) \colon \begin{array}{ll}
        E_i\subseteq D, &|E_i|=v_i \\
         |E_i\cap E_j|=0, &\big|D\setminus \cup_i E_i\big|=0, \end{array}\right\}.
    \]
   In particular, the triple $G_{\sf HC}, D(\vec v)$, $ H(v_i)$ is a minimizer.
\end{theorem}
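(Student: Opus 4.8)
The strategy is a compactness-and-rigidity argument. Fix $N$ and suppose, for contradiction, that there exists no such $\delta(N)$. Then one obtains a sequence of volume vectors $\vec v^{(k)}$ with $\max_i |v_i^{(k)}-1| \to 0$, $\sum_i v_i^{(k)}=N$, and minimizers $G^{(k)}, D^{(k)}, (E_i^{(k)})$ of the constrained problem in Theorem~\ref{thm:step 3} (with $\mu=0$, $L=\sqrt N$) whose energy is \emph{strictly less} than $\tfrac N2 \Per(H)$; call the energy $m_k$, so $m_k < \tfrac N2 \Per(H)$ for all $k$, while the competitor \eqref{eq:competitor H} shows $m_k \le \tfrac N2 \Per(H)$, hence in fact $m_k \le \tfrac N2\Per(H)$ and we are assuming it is strictly below by a fixed gap along a subsequence. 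The first step is to upgrade each minimizer to the locally finite representative furnished by Proposition~\ref{prop:planar local finiteness} and Theorem~\ref{thm:regularity constrained planar}, so that $D^{(k)}$ is bounded, the periodic partition $(E_i^{(k)}+g)_{i,g\in G^{(k)}}$ is a locally finite planar cluster with $C^{1}$ arcs meeting in threes at $120^\circ$, and $D^{(k)}\cap B_{r_{G^{(k)}}}(0)\neq\varnothing$. Using the uniform energy bound $m_k \le \tfrac N2\Per(H)$ together with the diameter–perimeter estimate, one controls $\operatorname{diam}(D^{(k)})$, and the constraint $d(G^{(k)})=N$ keeps the lattices in a compact family (Kuratowski/Mahler-type compactness, as used in Theorem~\ref{thm:step 3}); so up to subsequence $G^{(k)}\to G_\infty$, $D^{(k)}\to D_\infty$ in $L^1$, and $E_i^{(k)}\to E_i^\infty$ in $L^1$, with $|E_i^\infty|=1$ for each $i$ (the volumes converge since $v_i^{(k)}\to 1$ and the $E_i^{(k)}$ are uniformly bounded). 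By lower semicontinuity (Lemma~\ref{lem:semicontinuity}) the limit $G_\infty, D_\infty, (E_i^\infty)$ has energy $\le \liminf m_k \le \tfrac N2\Per(H)$, i.e. it is a minimizer of the \emph{equal unit volumes} problem, so by the Honeycomb theorem (Hales, \cite{Hales01}) its energy equals exactly $\tfrac N2\Per(H)$ and the limit partition is, up to rigid motion, the regular hexagonal tiling with $G_\infty=G_{\sf HC}$ and each $E_i^\infty$ a regular unit hexagon.

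The second and substantive step is to convert this $L^1$/energy convergence to the minimizer into a \emph{rigidity/contradiction}. Since $m_k < \tfrac N2\Per(H) = $ energy of the limit, and the limit is a \emph{strict} local minimizer of the hexagonal configuration among nearby periodic clusters — this is where one invokes the stability of the honeycomb network, \cite{pludaPozzetta23} (and \cite{FisherHenselLauxSimon2023}), which says the regular hexagonal network is a strict stable critical point of the length functional under the partition constraint, with a quantitative (e.g. quadratic) lower bound on the energy excess in terms of an appropriate distance from the hexagonal configuration — one reaches a contradiction provided one can show the minimizers $E_i^{(k)}$ are genuinely \emph{close in a strong enough topology} (not just $L^1$) to the limit hexagons, close enough to fall in the basin where the stability inequality applies. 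Establishing that strong closeness is the main obstacle: $L^1$ convergence of sets of finite perimeter plus convergence of perimeters gives, via the structure theory of planar sets of finite perimeter \cite{AmbrosioCasellesMasnouMorel01} and improved convergence of the ``good parts'' of the boundaries, Hausdorff-type convergence of the reduced boundaries away from a controlled bad set; combined with the uniform $C^{1,\alpha}$ regularity (from the $(0,r)$-minimality in Lemma~\ref{lem:minimality constrained} and \cite{NovagaPaoliniTortorelli23}) and an elimination/density argument to rule out vanishing small chambers, this should be promoted to $C^{1}$-closeness of $\partial E_i^{(k)}$ to the hexagonal network on each fixed ball, which is the input the network-stability result needs.

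With that closeness in hand, the stability inequality of \cite{pludaPozzetta23} yields, for $k$ large,
\[
\sum_i \tfrac12\Per(E_i^{(k)}) \;\ge\; \tfrac N2\Per(H) + c\,\mathrm{dist}\big((E_i^{(k)}), \text{hex config}\big)^2 \;\ge\; \tfrac N2\Per(H),
\]
contradicting $m_k < \tfrac N2\Per(H)$ and proving the existence of $\delta(N)$; then for $\vec v$ with $\max_i|v_i-1|<\delta$ the competitor \eqref{eq:competitor H} has energy exactly $\tfrac N2\Per(H)$ (computed in the excerpt via $\sum x_i = 0$) and therefore realizes the infimum, so $G_{\sf HC}, D(\vec v), H(v_i)$ is a minimizer. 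Two technical points deserve care: (i) one must handle the possibility that along the sequence some chambers $E_i^{(k)}$ are not connected or are split among the indecomposable components of $D^{(k)}$ — this is exactly why one first passes to the representative of Proposition~\ref{prop:planar local finiteness}, which consolidates the components, and then argues that for $k$ large the number and combinatorial adjacency of chambers must match the hexagonal pattern (a consequence of locally finiteness plus closeness); (ii) one must check that the perturbation $H(v_i)$ of the hexagonal tiling stays an admissible fundamental domain for $G_{\sf HC}$ with the prescribed areas — this is the content of the construction in Figures~\ref{fig:pic3} and~\ref{fig: D with H v} and the constraint $v_i\in(1/2,3/2)$ guaranteeing $|x_i|<l$, which the excerpt already records. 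The crux, and the step I expect to occupy most of the proof, is the passage from $L^1$-closeness to the $C^1$-closeness of boundaries required to invoke network stability; everything else is assembling results already available (Theorem~\ref{thm:step 3}, Proposition~\ref{prop:planar local finiteness}, Theorem~\ref{thm:regularity constrained planar}, the Honeycomb theorem, and \cite{pludaPozzetta23}).
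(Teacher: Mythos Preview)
Your overall architecture matches the paper's proof exactly: argue by contradiction, use Proposition~\ref{prop:planar local finiteness} to get bounded minimizers, pass to a subsequential limit, identify the limit as the honeycomb via Hales, and then derive a contradiction from the local minimality of the hexagonal network established in \cite{pludaPozzetta23}. The difference lies in the bridge from $L^1$ convergence to the closeness required by the stability result, and here the paper takes a different and cleaner route than the one you sketch.

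You propose to promote $L^1$ + perimeter convergence of the actual minimizers $E_i^{(k)}$ to $C^1$-closeness via uniform regularity and elimination. The paper does \emph{not} attempt this; it explicitly notes that the $E_i^n$ need not converge in Hausdorff, because they may carry small stray components or holes (and Proposition~\ref{prop:planar local finiteness} consolidates the fundamental domain $D$, not the individual chambers). Instead the paper introduces a \emph{modification lemma} (Lemma~\ref{lem:modification}): using the indecomposable decomposition of \cite{AmbrosioCasellesMasnouMorel01}, one iteratively reassigns small components between chambers to obtain \emph{simple} sets $\hat E_i^n$ with $\sum_i\Per(\hat E_i^n)\le\sum_i\Per(E_i^n)$. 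The modified sets no longer have the prescribed volumes $v_i^n$, but this is irrelevant since only the perimeter inequality is needed for the contradiction. Once the sets are simple, an elementary lemma (Lemma~\ref{lem:Hausdorff conv}: Jordan curves with converging length converge uniformly) gives Hausdorff convergence, and that is all \cite{pludaPozzetta23} requires---no $C^1$ closeness is needed.

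So your plan is correct in spirit, but the mechanism you propose for the step you rightly call ``the crux'' is harder than necessary and not obviously complete without the modification trick; that trick is the missing idea. Two minor points: you do not need a quantitative stability inequality with a $c\,\mathrm{dist}(\cdot)^2$ term, nor a fixed gap along the sequence---the bare local minimality $\tfrac N2\Per(H)\le \tfrac12\sum_i\Per(\hat E_i^n)$ for large $n$, chained with $\tfrac12\sum_i\Per(\hat E_i^n)\le \tfrac12\sum_i\Per(E_i^n)<\tfrac N2\Per(H)$, already yields the contradiction.
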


\begin{proof}
    We argue by contradiction. Suppose that there exist $N\in\N$ and sequences $v_i^n \in \big(1/2,3/2\big)$, with $i=1,...,N$, 
    such that $\lim_{n}v_i^n=1$ and
    \begin{equation}
         \inf_G\inf_D \inf \left\{ \sum_i \frac 12 \Per(E_i) \colon \begin{array}{ll}
        E_i\subseteq D, &|E_i|=v^n_i \\
         |E_i\cap E_j|=0, &\big|D\setminus \cup_i E_i\big|=0, \end{array}\right\}< \frac N2 \Per(H).  \label{eq:competitor honeycomb}
\end{equation}
    \noindent{\sc Step 1: Minimizers converge to hexagons in $L^1$}. Let us consider, for each $n\in\N$, the minimizers $G^n,D^n,(E_i)^n$ of \eqref{eq:competitor honeycomb} given by Proposition \ref{prop:planar local finiteness}, satisfying
    \[
     %\sup_{n\in\N}  \Per(E_i^n) <\infty, \qquad 
     |E_i^n|=v_i^n,\qquad \qquad E_i^n\subseteq D^n \subseteq B_{R_n}(0),   
    \]
    for all $i$, where $R_n = 3 r_{G_n} + C_n$ is as in Proposition \ref{prop:planar local finiteness}. We claim that $R_n$ is uniformly bounded. Indeed, $d(G^n)=N$ for all $n\in\N$, hence the sequence of covering radii $r_{G_n}$ are uniformly bounded as well and, tracing $C_n$ in the proof of Proposition \ref{prop:planar local finiteness}, we see that, having relied on \eqref{eq:C mu N}, it satisfies 
    \[
    C_n \le C(N)\Per( Q ) <\infty,\qquad \forall n \in\N,
    \]
   where $Q\subseteq\R^2$ is the standard cube with volume $N$ (here, the relevant fact is that $\sqrt N\cdot \Z^2$ is an admissible lattice in the minimization).

   Given the condition $d(G^n)=N$ and the uniform bounds, up to a non-relabelled subsequence, we infer the existence of a lattice $G$ with $d(G^n)=N$ and sets $(E_i)\subseteq \R^2$ so that
   \begin{align*}
       &G^n \to G\qquad \text{in Kuratowski sense,}\\
       &E_i^n \to E_i,\quad\text{ in }L^1\text{ for all }i=1,...,N, 
   \end{align*}
   as $n\uparrow\infty$. The second claim is simply given by the BV-precompactness of sets of finite perimeter, as they are uniformly contained in some ball.  Notice then, by $L^1$-convergence and since $v_i^n \to 1$, we also get that 
   \[
   |E_i|=1,\qquad \text{ for all }i=1,...,N, 
   \]
   and thus, since $|E_i^n\cap E^j_n|=0$ for $i\neq j$,  defining $D\coloneqq \cup_i E_i$ we have $|D|=N$. The first claim follows instead by the pre-compactness results for sequences of lattices with fixed volume studied in \cite[Theorem 2.10]{CesaroniFragalaNovaga23}: the case in which $G$ contains a line is ruled out by the fact that $D^n \to D$ and $|D|=N$. In particular, by lower semicontinuity, we see that
   \[
        | (D + g_l) \cap (D+g_m)|\le \liminf_{n \uparrow\infty}|(D^n + g^n_l) \cap (D^n+g^n_m)|=0,\qquad\forall g_l,g_m \in G,
   \]
   where $g^n_l,g_m^n \in G_n$ are converging to $g_l,g_m$ respectively, and similarly that $|\R^2 \setminus \cup_{g\in G} (D+g)| =0$. Thus, we get that $D$ is a fundamental domain for the action of $G$. Combining all, we achieved by lower semicontinuity and by \eqref{eq:competitor honeycomb} that
   \begin{equation}
        \frac 12 \sum_i \Per(E_i) \le  \liminf_{n\uparrow\infty}\frac12 \sum_i\Per(E_i^n) \le \frac N2 \Per(H).
   \label{eq:lsc HC}
   \end{equation}
   However, since the regular Honeycomb is the unique minimizer of the partitioning problem with equal area \cite{Hales01}, we directly deduce that $G=G_{\sf HC}, D=D_{\sf HC}$ and the partition $E_i$ is given by $N$ different  $G$-translation of a regular hexagon $H$ which we denote $H_i$ for each $i$. Notice, for future use that, since equality occurs in \eqref{eq:lsc HC} and each term in the sum is lower semicontinuous, then we also get
   \begin{equation}
   E_i^n \to H_i\quad\text{in }L^1,\qquad \Per(E_i^n)\to\Per(H),\quad\text{for all $i=1,..,N$ as }n\uparrow\infty.
   \label{eq: Per Ei to Per H}
   \end{equation}

   \noindent{\sc Step 2: Modified minimizers converge in Hausdorff distance}. Since in general the sequence 
   $E_i^n$ converge to $H_i$ only in $L^1$, we will modify the sets $E_i^n$, without increasing their perimeter, in order to gain Hausdorff convergence.

   Let us recall a general decomposition result for a set $E\subseteq \R^2$ of finite perimeter. By the structure theory in \cite{AmbrosioCasellesMasnouMorel01}, it is possible to consider indecomposable components $E_j$ of $E$ so that $E=\cup_j E_j$ and $\sum_j\Per(E_j)  = \Per(E)<\infty$. Up to reordering the sets, we can suppose that $|E_1| \ge |E_j|$ for all $j\neq 1$. 
   In particular, we get decompositions $E^n_{i,j}$ of $E_i^n$ so that 
   \begin{equation}
        \sum_j \Per(E^n_{i,j}) = \Per(E^n_i),  \qquad \forall n \in \N,\, i=1,...,N\label{eq:decomposition E i j n}.
   \end{equation}
   Since the sets $E_i^n$ satisfy \eqref{eq: Per Ei to Per H}, it follows that $E^n_{i,1} \to H_i$ and $E^n_{i,j} \to \emptyset$ for $j>1$ in $L^1$, as $n\to+\infty$. As a consequence, letting $F^n_{i,\ell}$ be the bounded indecomposable components of the complementary set  $\R^2\setminus E_i^n$, we can assume that $|E^n_{i,1}|>1/2$, $|E^n_{i,j}|<1/2$ for all $j>1$ and $|F^n_{i,\ell}|<1/2$ for all $\ell$.
   
   Fix now $n\in\N$ and recall that, being $(E^n_i)$ a bounded minimal partition (cf. Proposition \ref{prop:planar local finiteness}), then the regularity theory of Theorem \ref{thm:regularity constrained planar} applies giving in particular that they are regular. Hence, $|E^n_{i,j}| \neq 0$ for at most finitely many indexes $j$. Moreover, $D^n$ are uniformly bounded, and we can thus invoke iteratively Lemma \ref{lem:modification} several but finitely many times until we reach a partition $(\hat E^n_i)$ of simple sets and an associated fundamental domain $\hat D^n$ for the action of $G^n$ satisfying
   \[
    \frac12 \sum_i \Per(\hat E^n_i)\le \frac 12 \sum_i\Per(E_i^n) \le \frac N2\Per(H).
   \]
   By the very construction, (recall that each modification step is made by a piece $E^n_{j,k} \to \emptyset$ in $L^1$ for some $j,k>1$)  we have that $\hat E^n_{i} \to H_i$ as $n\to\infty$ which, combined with the above and by lower semicontinuity, guarantees also that $\Per(\hat E^m_i)\to\Per(H)$ as $n\to\infty$. We are thus in the position to apply Lemma \ref{lem:Hausdorff conv} below to deduce that $\hat E_i^n \to H_i$ in the Hausdorff sense.

\noindent{\sc Step 3: Conclusion}. Under the action of the group $G^n$, the sets $(\hat E_i^n)$ generate a partition of $\R^2$
which is as close as we want in the Hausdorff distance, for $n$ big enough, to the honeycomb partition.
%for $n$ big enough to the partition given by $N$ regular hexagons in $D_{\sf HC}$. 
We can thus invoke the local minimality result in \cite{pludaPozzetta23} (see also \cite[Corollary 7.5]{PluTon24}) to deduce that
\[
\frac N2\Per(H) \le \frac 12\sum_i\Per(\hat E_i^n)\qquad\text{for $n$ large enough}.
\]
However, we assumed that strict inequality holds in \eqref{eq:competitor honeycomb}, from which we get the contradiction
\[
\frac 12\sum_i\Per(\hat E_i^n) \le \frac 12\sum_i\Per( E_i^n) < \frac N2\Per(H),
\]
for $n$ sufficiently large. Finally, for the last conclusion, we have that $G_{\sf HC}, D(\vec v)$ and $E_i\coloneqq H(v_i) $ is a minimizer, being a competitor with the lowest possible energy.
\end{proof}
The last assertion states that the configuration in Figure \ref{fig: D with H v} is minimal but it says nothing about uniqueness of minimizers. 
Notice that reordering the positions of the cell in Figure \ref{fig: D with H v} gives trivially other minima. Hence, uniqueness is eventually to be discussed up to permutations of the entries in $(v_1,...,v_N)$.

In the above proof, we used the following two results, namely a modification lemma and an elementary improved convergence lemma. Recall that a set $E$ is  \emph{simple}, if it is indecomposable and saturated, that is, $\R^2\setminus E$ is also indecomposable (see \cite{AmbrosioCasellesMasnouMorel01}). We also denote by $n_E \in \N\cup\{\infty\}$ the number of connected components of $E$.
\begin{lemma}\label{lem:modification}
Let $N\in\N$, let $G$ be a lattice in $\R^2$, let $D\subseteq\R^2$ be a fundamental domain for the action of $G$ and let $(E_i)$ be a partition of $D$ up to negligible sets with $i=1,...,N$. Consider $(E_{i,k})_{ k \in \N}$ indecomposable decomposition of $E_i$ ordered so that $k\mapsto |E_{i,k}|$ is non-increasing and suppose that $(E_i)$ are not all simple sets. Assume further that, if $E_{i,1}$ is not saturated, then $ |  (E_{j,1}+g) \cap  {\rm sat}(E_{i,1})| =0$ for all $g\in G$, $j\ne i$.

Then, there are $i,j \in \{1,...,N\}$ with $i \neq j$ and $k>1,g \in G$ s.t. $\HH^1(\partial^* E_{i,1} \cap\partial^* (E_{j,k}+g))>0$. In particular, setting\[
\hat E_i \coloneqq E_i \cup (E_{j,k}+g),\qquad \hat E_j \coloneqq E_j \setminus E_{j,k}, \qquad \hat E_l \coloneqq E_l,\quad \forall l\neq\{i,j\},
\]
we have that $\hat D \coloneqq \cup_i \hat E_i$ is a fundamental domain for $G$ and $ \sum_i \Per(\hat E_i) <\sum_i \Per(E_i)$. Finally, we have
\[
n_{\hat E_j} \le n_{E_j}-1,\qquad n_{\hat E_l}\le n_{E_l},\quad \forall l\neq j.
\]
\end{lemma}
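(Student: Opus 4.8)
The plan is to reduce everything to the adjacency assertion $\HH^1(\partial^* E_{i,1}\cap\partial^*(E_{j,k}+g))>0$ for suitable $i\ne j$, $k>1$, $g$, since the remaining claims are routine consequences of the structure theory of planar sets of finite perimeter. First observe that such a $g$ must be $\ne\mathrm{id}$: distinct indecomposable components of the decomposition of a single set share no reduced boundary (this is precisely the content of $\Per(E_i)=\sum_k\Per(E_{i,k})$), so $i=j$ could not occur; for $i\ne j$ the choice of $g$ is free. Granting the adjacency, $\hat D=\bigcup_i\hat E_i$ equals $(D\setminus E_{j,k})\cup(E_{j,k}+g)$, a fundamental domain because we have merely replaced a subset of $D$ by one of its $G$-translates, and the $\hat E_i$ evidently partition it. For the energy, additivity of the decomposition gives $\Per(E_j\setminus E_{j,k})=\Per(E_j)-\Per(E_{j,k})$, while $E_i$ and $E_{j,k}+g$ are essentially disjoint and meet along a set of positive $\HH^1$-measure (it contains $\partial^* E_{i,1}\cap\partial^*(E_{j,k}+g)$), so $\Per(\hat E_i)=\Per(E_i)+\Per(E_{j,k})-2\,\HH^1(\partial^* E_i\cap\partial^*(E_{j,k}+g))$; summing over $i$ and using translation invariance yields $\sum_i\Per(\hat E_i)<\sum_i\Per(E_i)$. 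Finally, deleting the whole component $E_{j,k}$ drops the component count of cell $j$ by exactly one, while glueing $E_{j,k}+g$ onto the component $E_{i,1}$ it touches creates no new component, so $n_{\hat E_j}\le n_{E_j}-1$ and $n_{\hat E_l}\le n_{E_l}$ for $l\ne j$.

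Next I would show that some $E_i$ is decomposable. The role of the standing hypothesis is to force the sets $\mathrm{sat}(E_{i,1}+g)$, $i=1,\dots,N$, $g\in G$, to be pairwise essentially disjoint: were two of them to overlap, by the laminarity dichotomy for saturations of indecomposable sets one would contain a principal component of the other; for two different cells this contradicts the hypothesis after a translation, and for one cell it would put a nonzero translate of $E_{i,1}$ inside one of its holes $H$. Since a hole of an indecomposable planar set is simply connected (a chunk of $E_{i,1}$ walled off by $H$ would disconnect $E_{i,1}$), one gets $\mathrm{sat}(H)=H$ and hence $H+g\subseteq H$, i.e.\ $H$ would be invariant under the nonzero lattice vector $g$ — impossible for a set of finite measure. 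Consequently every hole of a principal component can only be filled by non-principal components; so if all cells were indecomposable there would be no non-principal components at all, yet "not all simple" forces some $E_{i_0}=E_{i_0,1}$ to be non-saturated, hence to have a nonempty hole which nothing can fill — a contradiction. Thus some cell is decomposable, and the $G$-periodic set $S$ (the union of all translates of all non-principal components) has both positive measure and positive-measure complement $U$ (the union of all translates of all principal components).

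The heart of the matter is to realize the adjacency between components of two different cells. Since $S$ is $G$-periodic with $0<|S\cap D|<|D|$, its reduced boundary is nonempty, and along $\partial^* S=\partial^* U$ some non-principal component is adjacent to some principal one; one must arrange that the two belong to different cells. I would argue by contradiction, assuming every non-principal/principal contact pairs components of the same cell. Fixing a decomposable cell $i_0$ and looking at $\partial^*\big(\bigcup_g(E_{i_0}+g)\big)$: at $\HH^1$-a.e.\ point it separates a component of cell $i_0$ from a component of some cell $j\ne i_0$, and if one side were principal and the other non-principal we would be done, so under the contradiction hypothesis this boundary splits into a "principal–principal" part and a "non-principal–non-principal" part. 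Running this simultaneously over all cells and combining it with the no-shared-boundary property — a non-principal component $E_{i_0,k}$ never touches $E_{i_0,1}$ itself, only strict $G$-translates of it — propagates the cell-matching constraint; tracing the outermost boundary of a connected component of $S$ (lined from outside only by principal components and from inside only by non-principal components of the matching cells) one then shows the matching cannot close up consistently around a bounded Jordan contour, a contradiction. This final step, excluding the degenerate all-same-cell configuration, is where I expect the real difficulty; everything else is the elementary perimeter bookkeeping above together with the ACMM fine structure of planar sets of finite perimeter.
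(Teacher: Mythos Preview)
Your bookkeeping for the ``routine'' consequences (that $\hat D$ is fundamental, the strict perimeter drop via $\Per(E_j\setminus E_{j,k})=\Per(E_j)-\Per(E_{j,k})$ and the cancellation along $\partial^*E_{i,1}\cap\partial^*(E_{j,k}+g)$, the component counts) is fine and matches the paper. One small muddle: your opening sentence ``such a $g$ must be $\ne\mathrm{id}$'' does not say what you mean --- for $i\ne j$ the adjacency can perfectly well occur with $g=\mathrm{id}$ inside $D$, and your own clause ``for $i\ne j$ the choice of $g$ is free'' already contradicts it.

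The real issue is the adjacency claim. You invest a lot of structure --- pairwise disjointness of $\mathrm{sat}(E_{i,1}+g)$, the $G$-periodic set $S$ of non-principal translates, a contradiction hypothesis that every principal/non-principal contact pairs the same cell --- and then at the decisive moment you write that ``tracing the outermost boundary of a connected component of $S$ \dots\ the matching cannot close up consistently around a bounded Jordan contour''. That is not an argument; you have not said what the obstruction is, and you yourself flag this as ``where I expect the real difficulty''. This is a genuine gap.

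The paper bypasses all of this with a short case split that stays local. If some principal component $E_{i,1}$ is \emph{not} saturated, look at a hole of $E_{i,1}$: it is filled by translates $E_{j,k}+g$, and the standing hypothesis rules out $k=1$ with $j\ne i$, while $j=i$, $g=0$ is excluded by the decomposition identity you already used; so some $E_{j,k}+g$ with $k>1$ shares boundary with $E_{i,1}$. If instead every $E_{i,1}$ is saturated, then ``not all simple'' forces some $E_i$ to be decomposable; setting $D_1:=\bigcup_\ell E_{\ell,1}$ one has $E_{i,2}\subseteq D\setminus D_1$, and since $D$ is fundamental one reads off $\ell\ne j$, $g\in G$ with $\HH^1(\partial^*(E_{\ell,1}-g)\cap\partial^*E_{j,2})>0$. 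In either branch the indices $i,j,k,g$ appear directly, with the hypothesis used once, locally, rather than through a global periodic contradiction. Your saturation-disjointness observation is correct and pleasant, but it is not needed for the conclusion, and it does not supply the missing final step of your scheme.
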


\begin{proof}
The assumption that $(E_i)$ are not all simple sets guarantees that at least one of the following conditions hold: 
$E_i$ is not saturated for some $i\in \{1,...,N\}$, or $|E_{j,k}|>0$ for some $k>1$ and some $j \in \{1,...,N\}$.

Suppose first that $E_i$ is indecomposable (in particular, $E_{i,1}=E_i$) and it has a hole. Then, since $(E_i)$ is a partition of $D$ and $D$ is a fundamental domain, there is necessarily $E_{j,k}$ for some $j, k\ge 1$ and there is $g\in G$ so that $\HH^1(\partial^* E_i \cap \partial^*( E_{j,k} +g) )>0$. Notice that, by assumption, in this case, we must have $k>1$.

Suppose now there is $E_i$ which is decomposable. If $E_{i,1}$ has a hole, we can repeat the above argument to conclude similarly. 

It remains to consider the case where $E_i$ that is decomposable, in particular $|E_{i,2}|>0$, and $E_{i,1}$ is saturated. 
Define $D_1 \coloneqq \cup_k E_{k,1}$ and notice that $E_{i,2} \subseteq D\setminus D_1$. Since $D$ is a fundamental domain, 
there are $\ell \neq j$ and $g \in G$ so that $ \HH^1(\partial^* (E_{\ell,1} - g)\cap \partial^*E_{j,2})>0$. 

In all cases, we found $i,j \in \{1,...,N\}$, $k>1$ and $g\in G$ so that  $\HH^1(\partial^* E_i \cap \partial^*( E_{j,k} +g) )>0$. Consequently, the sets $\hat E_i$ are well defined for all $i \in \{1,...,N\}$. It is immediate to see that $\hat D$ is again a fundamental domain and that the claimed inequalities for $n_{\hat E_i}$ hold true.
\end{proof}

\begin{lemma}\label{lem:Hausdorff conv}
    Let $B\subseteq \R^2$ be a ball and let $E_n \subseteq B$ be a sequence of simple sets of finite perimeter so that $E_n \to E$ in $L^1$ and $\Per(E_n)\to\Per(H)$ as $n\uparrow\infty$. Then, $E_n^{(1)}$ (the essential interior) converges in the Hausdorff sense to $H$.
\end{lemma}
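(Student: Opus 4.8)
The plan is to show $d_{\mathrm H}(E_n^{(1)},H)\to 0$ by proving, for every fixed $\epsilon>0$ and all $n$ large, the two one-sided inclusions $H\subseteq E_n^{(1)}+\overline{B}_\epsilon$ and $E_n^{(1)}\subseteq H+\overline{B}_\epsilon$. (I identify the $L^1$-limit $E$ with the reference hexagon $H$: this is automatic in the intended application, where Step~1 of Theorem~\ref{thm:stability HC} already gives that $E$ is a translate of $H$, and one reduces to $E=H$ by translating, using translation invariance of $\Per$.) Throughout I write $u(x)\coloneqq\dist(x,H)$ and $H_t\coloneqq\{u<t\}$, so that for $t>0$ the level set $\{u=t\}=\partial H_t$ is a $C^1$ convex curve. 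The first (``inner'') inclusion should follow from $L^1$-convergence alone: if it failed, I would get a subsequence and points $y_n\in H$ with $B_\epsilon(y_n)\cap E_n^{(1)}=\emptyset$, hence $|B_\epsilon(y_n)\cap E_n|=0$ (since $|E_n\triangle E_n^{(1)}|=0$); using $E_n\subseteq B$ to extract $y_n\to y\in H$, one has $|B_{\epsilon/2}(y)\cap E_n|=0$ for $n$ large, contradicting $|B_{\epsilon/2}(y)\cap E_n|\to|B_{\epsilon/2}(y)\cap H|>0$ (positive because $H$ is a convex body containing $y$).

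The second (``outer'') inclusion is the core, and is where the hypotheses ``$E_n$ simple'' and ``$\Per(E_n)\to\Per(H)$'' should enter. First I would record that lower semicontinuity of the perimeter on the open set $H_\delta\supseteq\partial^*H$ gives $\liminf_n\Per(E_n;H_\delta)\ge\Per(H;H_\delta)=\Per(H)$, so that, together with $\Per(E_n)\to\Per(H)$ and additivity of the perimeter measure on disjoint sets, one obtains for a.e.\ small $\delta>0$
\[
\Per\big(E_n;\{u>\delta\}\big)\longrightarrow 0\qquad\text{as }n\to\infty .
\]
Then I would argue by contradiction: if the inclusion fails, along a subsequence there is $x_n\in E_n^{(1)}$ with $u(x_n)>\epsilon$ (shrink $\epsilon$ to make this strict), and I fix $\delta\in(0,\epsilon/2)$ as above. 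From $E_n\to H$ in $L^1$ one gets $|E_n\setminus H|\to 0$, while for every $s<\epsilon$ the set $E_n\cap\{u>s\}$ has positive measure (it contains a density-point ball around $x_n$) and $|E_n\cap\{u<s\}|\ge|E_n\cap H|\to 1>0$. For a.e.\ $s\in(\delta,\epsilon)$ the splitting $E_n=(E_n\cap\{u<s\})\sqcup(E_n\cap\{u>s\})$ is then a decomposition into two sets of positive measure, so indecomposability of $E_n$ forces $E_n^{(1)}\cap\{u=s\}$ to be a \emph{nonempty} subset of the curve $\{u=s\}$; it must moreover be a \emph{proper} subset for $n$ large, since otherwise $E_n$ would contain an annular region $\{s_1<u<s_2\}$ of definite measure, against $|E_n\setminus H|\to 0$. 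A nonempty proper finite-perimeter subset of a circle has at least two relative boundary points, which (for a.e.\ $s$, by slicing of sets of finite perimeter) lie in $\partial^*E_n$; hence, by the coarea formula along the level sets of $u$,
\[
\Per\big(E_n;\{u>\delta\}\big)\ \ge\ \Per\big(E_n;\{\delta<u<\epsilon\}\big)\ \ge\ \int_\delta^\epsilon\#\big(\partial^*E_n\cap\{u=s\}\big)\,ds\ \ge\ 2(\epsilon-\delta)-o(1),
\]
contradicting the previous display. This proves the outer inclusion and hence the lemma.

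The hard part will be that last ``tentacle-elimination'' estimate, and making it uniform in $n$. The two obstructions to $E_n^{(1)}$ being Hausdorff-close to $H$ — a detached blob of positive measure far from $H$, or a long thin filament reaching distance $\ge\epsilon$ — are killed respectively by indecomposability (a detached positive-measure component splits $E_n$) and by the perimeter convergence (a filament of length $\gtrsim\epsilon-\delta$ contributes perimeter $\gtrsim 2(\epsilon-\delta)$ inside $\{u>\delta\}$, which is vanishing). The delicate technical point is the coarea/slicing for the Lipschitz function $u=\dist(\cdot,H)$, whose level sets are the convex curves $\partial H_t$ rather than hyperplanes: one needs bi-Lipschitz tubular coordinates on $\{u>0\}$ (smooth normal coordinates away from the rounded corners of $\partial H$, radial coordinates on the corner sectors) to invoke Vol'pert's slicing theorem and conclude that, for a.e.\ $s$, the slice $E_n\cap\{u=s\}$ is an honest finite-perimeter subset of $\partial H_s$ whose relative boundary sits in $\partial^*E_n$. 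Saturatedness of $E_n$ is not strictly needed above, but it allows the alternative, more topological packaging in which $E_n^{(1)}$ is the region enclosed by a single rectifiable Jordan curve of length $\Per(E_n)\to\Per(H)$, and the filament obstruction is read off that curve directly.
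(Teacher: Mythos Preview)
Your sketch is essentially correct, but it follows a genuinely different route from the paper's proof. The paper exploits the \emph{simple} hypothesis directly: by \cite{AmbrosioCasellesMasnouMorel01} each $E_n$ is, up to a null set, the interior of a rectifiable Jordan curve $\Gamma_n$ with $\Per(E_n)=\HH^1(\Gamma_n)$. Parametrizing $\Gamma_n$ by arc-length gives an equi-Lipschitz, equi-bounded family of curves $\gamma_n$, so Arzel\`a--Ascoli produces a uniform limit $\gamma$ with ${\rm length}(\gamma)\le\Per(H)$. Weak convergence of the perimeter measures $\HH^1\mres\Gamma_n\rightharpoonup\HH^1\mres\partial H$ (from strict BV convergence) forces $\partial H\subseteq{\rm Im}(\gamma)$, whence ${\rm Im}(\gamma)=\partial H$ and Hausdorff convergence of the boundaries, and thus of the interiors, follows.

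Your argument instead localizes the perimeter (showing $\Per(E_n;\{u>\delta\})\to 0$ via lower semicontinuity on $\{u<\delta\}$) and then rules out far-away mass by a coarea/slicing estimate along level sets of $u=\dist(\cdot,H)$, using indecomposability to force the one-dimensional slice on $\{u=s\}$ to be nonempty and $|E_n\setminus H|\to 0$ to force it to be proper. The paper's approach is shorter and cleaner in the planar setting, since the Jordan-curve structure and Arzel\`a--Ascoli do all the work; your approach uses less of the ``simple'' hypothesis (only indecomposability enters in the core step) and is closer in spirit to arguments that generalize to higher dimensions, at the cost of the Vol'pert-type slicing machinery you flag. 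It is worth noting that the ``alternative, more topological packaging'' you mention at the end---reading the filament obstruction directly off the bounding Jordan curve---is exactly the paper's strategy.

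One small point to tighten: the nonemptiness of the slice is cleanest via the perimeter-splitting formula $\Per(E_n\cap\{u<s\})+\Per(E_n\cap\{u>s\})=\Per(E_n)+2\HH^1(E_n^{(1)}\cap\{u=s\})$ for a.e.\ $s$, which shows that $\HH^1(E_n^{(1)}\cap\{u=s\})=0$ together with positive mass on both sides would decompose $E_n$; this is more robust than invoking path-connectedness of $E_n^{(1)}$.
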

\begin{proof}
    Since $E_n$ are simple sets of finite perimeter, they are the interior (up to Lebesgue negligible sets) of a Jordan boundary $\Gamma_n$ \cite{AmbrosioCasellesMasnouMorel01}, hence
    \[
    \Per(E_n) = \HH^1(\Gamma_n),\qquad\forall n \in\N.
    \]
    Consider Lipschitz parametrizations $\gamma_n$ of $\Gamma_n$ by arc-length that, by assumptions, are equi-bounded, since $\Gamma_n \subseteq B_R(0)$, and equi-Lipschitz, since ${\rm lenght}(\gamma_n) = \HH^1(\Gamma_n) \to \Per(E)$. In particular, by Ascoli-Arzel\'a it holds up to a subsequence $(n_k)$ that $\gamma_{n_k} \to \gamma$ uniformly as $k\uparrow\infty$ for some rectifiable curve $\gamma$ and, by lower semicontinuity of the length functional we also have
    \begin{equation}
        {\rm length}(\gamma)\le \lim_{n\uparrow\infty}  \HH^1(\Gamma_n) = \Per(H).
    \label{eq:lsc length}
    \end{equation}
    We claim that ${\rm Im}(\gamma)=\partial H$. First, we notice that $\partial H \subseteq {\rm Im}(\gamma)$. Indeed, the assumptions guarantee that $\HH^1\mres{\Gamma_n}\rightharpoonup \HH^1\mres{\partial H}$ weakly in duality with continuous and bounded functions as $n\uparrow\infty$ (here, we are using that $\Gamma_n$ and $\partial^*E_n$ are $\HH^1$-equivalent, the representation formula for the Perimeter measure and that $E_n\to E$ strictly in BV gives the weak convergence of the perimeter measures). Therefore, for every $x \in \partial H$, this convergence guarantees the existence of $x_{n_k} \in \Gamma_{n_k}$  so that $x_{n_k} \to x$ as $k \uparrow\infty$. As $x_{n_k} \in {\rm Im}(\gamma_{n_k})$ and $\gamma_{n_k}$ converges uniformly to $\gamma$, it then holds that $x \in {\rm Im}(\gamma)$ as desired. However, this gives $\Per(H)\le {\rm length}(\gamma)$ that, combined with lower semicontinuity \eqref{eq:lsc length} forces $\Per(H) =  {\rm length}(\gamma)$. This implies the claim.

    We notice that $\gamma$ does not depend on the chosen subsequence hence we get that $\gamma_n \to \gamma$ as $n\uparrow\infty$ along the original sequence. Finally, Hausdorff convergence of the essential interiors $E_n^{(1)}$ to $H$ follows easily from the derived uniform convergence.  
\end{proof}
\begin{remark}
    \rm We observe that Theorem \ref{thm:stability HC} extends to the case of the anisotropic perimeter $\Per_\varphi$ when the associated Wulff shape is a regular Hexagon $H$ (w.l.o.g. we can assume that $H$ has inradius $1$). Indeed, in this case it holds $\Per_\varphi(H) = \Per(H)$ and $\Per_\varphi(E) \ge \Per(E)$ for every $E\subseteq \R^2$ Borel, where $\Per$ is the classical (isotropic) perimeter. Hence, for $\delta(N)$ as in Theorem \ref{thm:stability HC}, and for all $\vec v = (v_1,..,v_N)$ satisfying \eqref{eq:quasi unit volumes}, 
    we deduce that
    \[
    \frac N2\Per_\varphi(H) = \frac N2\Per(H) 
    \le \inf_G\inf_D \inf \left\{ \sum_i \frac 12 \Per_\varphi(E_i) \colon \begin{array}{ll}
        E_i\subseteq D, &|E_i|=v_i \\
         |E_i\cap E_j|=0, &\big|D\setminus \cup_i E_i\big|=0, \end{array}\right\}.
    \]
    Again, we see that \eqref{eq:competitor H} are competitors for the above optimization problem and it is immediate to see that $\Per_\varphi(H(v_i))=\Per(H(v_i))$ for each $i=1,...,N$. Observing that $\sum_i \Per(H(v_i))=N\Per(H)$
    if the hexagons of $H(v_i)$ have the same orientation of $H$,
    we get that the thesis of Theorem \ref{thm:stability HC} also holds in this case.
\end{remark}
\begin{remark}
    \rm 
    Let us comment on the asymptotic behavior of $\delta(N)$ for $N$ large. We notice that $\liminf_{N\to\infty}\delta(N)= 0$. Indeed, 
    for any fixed $\delta\in (0,1/2)$ we consider the lattice $G=N\cdot\mathbb Z^2$
    and the fundamental domain $D:=[0,N]\times[0,N]$. We assume for simplicity that $N$ is even and
    we fix volumes $v_i=1-\delta$ for $i\in \{1,\ldots N^2/2\}$
    and $v_i=1+\delta$ for $i\in \{N^2/2 + 1,\ldots N^2\}$.
    We can write $D=D_1\cup D_2$ with $D_1:=[0,(1-\delta)N/2]\times [0,N]$ and $D_2=[(1-\delta)N/2,N]\times [0,N]$,
    so that $|D_1|=(1-\delta)N^2/2$ and $|D_2|=(1+\delta)N^2/2$.
    We now consider minimal partitions of $D_1$ and $D_2$ into $N^2/2$ sets of volume $1-\delta$ and $1+\delta$, respectively.
    One can show by a direct construction (see for instance \cite{HM05}) that the energy of the resulting partition of $D$ 
    is bounded above by the quantity
    \[
    (\sqrt{1-\delta}+\sqrt{1+\delta}) P(H) N^2/4 + C N\le P(H) N^2/2 - c \delta^2 N^2 + C N, 
    \]
    for suitable constants $c, C>0$, independent of $N$. It follows the partition constructed in Theorem \ref{thm:stability HC}
    can be a minimizer only if $\delta(N^2)\le C/(cN)$. 
\end{remark}
\begin{remark}
    \rm 
    A related problem which we do not consider in this paper is understanding the shape of minimizers, in the planar case and with the classical perimeter, when one volume satisfies $v_1 \approx 1 $ and the remaining ones are very small. At least when $N\in \{2,3\}$, there are natural minimizing candidates which can be obtained as slight modifications of the Honeycomb tiling, see Figure \ref{fig:conjecture}.
\begin{figure}[!ht]
    \centering
    \includegraphics[scale =1.1]{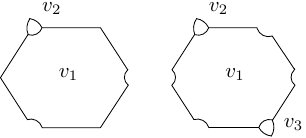}
    \caption{Candidate minimzers for $N=2$ and $N=3$, respectively.}
    \label{fig:conjecture}
\end{figure}
    We mention that the candidate for $N=2$ in Figure \ref{fig:conjecture} has also been proposed
    in the works \cite{FortesTeixeira01,FortesGranerTeixeira02} (see also \cite{Morgan1999}) where its minimality has been investigated by numerical computations. 
    
    We notice that the regularity result obtained in Theorem \ref{thm:regularity constrained planar} completely describes the interfaces between regions of a minimizer, the only possibility being segments or circular arcs, meeting at triple points with equal angles. 
\end{remark}

%%%%%%%%%%%%%%%%%%%%%%%%%%%%%%%%%%%%%%%%%%%%%%%%%%%%%%%%%%%%%%%%%%%%%%%%%%%%%%%%%%%%%%%%%%%%%%%%%%%%%%%%%%%%%%%%%%%%%%%%%%%

    %\parskip0pt 
    %\itemsep0pt
    %\bibliographystyle{siam}
    %\bibliography{bibliography} 

\end{document}